\newcommand{\RR}{\mathbb{R}}
\newtheorem{definition}{Definition}
\newtheorem{remark}{Remark}
\DeclareMathOperator{\area}{area}
\newcommand{\spb}[1]{\smallskip}
\newcommand{\mpb}[1]{\medskip}
\newcommand{\bpb}[1]{\bigskip}
\renewcommand{\P}{{\cal P}}
\title{Decompositions of a rectangle into\\ non-congruent rectangles of equal area}
\author{C. Dalf\'o$^a$, M. A. Fiol$^b$, N. L\'opez$^c$, A. Mart\'{\i}nez-P\'erez$^d$\\ 
{\small $^a$Dept. de Matem\`atica, Universitat de Lleida}\\
{\small Igualada (Barcelona), Catalonia}\\
{\small {\tt cristina.dalfo@udl.cat}}\\
{\small $^{b}$Dept. de Matem\`atiques, Universitat Polit\`ecnica de Catalunya} \\
{\small Barcelona Graduate School of Mathematics} \\
{\small Barcelona, Catalonia} \\
{\small {\tt miguel.angel.fiol@upc.edu}} \\
{\small $^c$Dept. de Matem\`atica, Universitat de Lleida}\\
{\small Lleida, Spain}\\
{\small {\tt nacho.lopez@udl.cat}}\\
$^d${\small Dept. de An\'alisis Econ\'omico y Finanzas, Universidad de Castilla-La Mancha}\\
{\small Talavera de la Reina, Spain}\\
{\small{\tt {alvaro.martinezperez@uclm.es}}}
}
\newtheorem{proposition}{Proposition}[section]
\newtheorem{corollary}{Corollary}[section]
\newtheorem{lemma}{Lemma}[section]
\newtheorem{theorem}{Theorem}[section]
\newtheorem{conjecture}{Conjecture}[section]
\newtheorem{problem}{Problem}[section]
\newcommand\blfootnote[1]{%
	\begingroup
	\renewcommand\thefootnote{}\footnote{#1}%
	\addtocounter{footnote}{-1}%
	\endgroup
}
\begin{document}
\DeclareGraphicsExtensions{.jpg,.pdf,.mps,.png}

\date{}

\maketitle

\blfootnote{
\begin{minipage}[l]{0.28\textwidth} \includegraphics[trim=10cm 6cm 10cm 5cm,clip,scale=0.15]{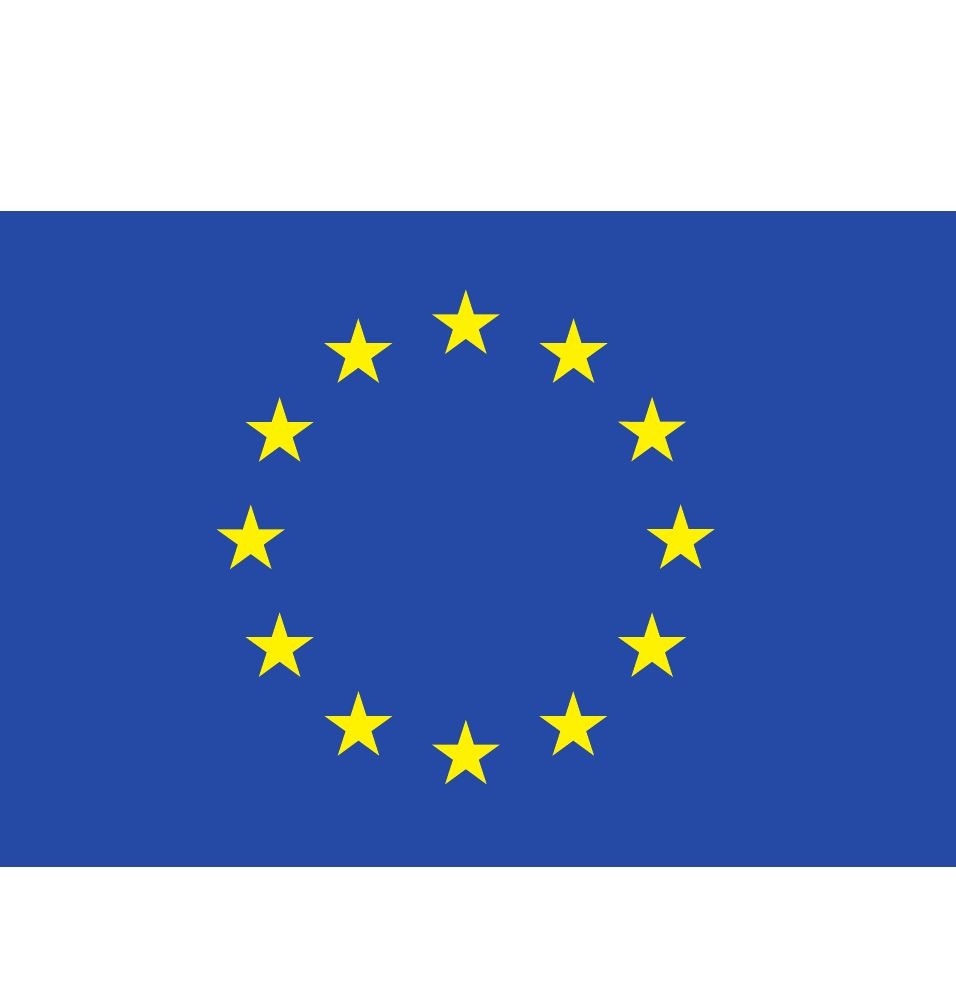} \end{minipage}  \hspace{-2cm} \begin{minipage}[l][1cm]{0.79\textwidth}
The research of the first author has also received funding from the European Union's Horizon 2020 research and innovation programme under the Marie Sk\l odowska-Curie grant agreement No 734922.
\end{minipage}
}

\begin{abstract}
In this paper, we deal with a simple geometric problem: Is it possible to partition a rectangle into $k$ non-congruent rectangles of equal area? This problem is motivated by the so-called `Mondrian art problem' that asks a similar question for dissections with rectangles of integer sides. Here, we generalize the Mondrian problem by allowing rectangles of real sides. In this case, we show that the minimum value of $k$ for a rectangle to have a `perfect Mondrian partition' (that is, with  non-congruent  equal-area rectangles) is seven. Moreover, we prove that such a partition is unique (up to symmetries) and that there exist exactly two proper perfect Mondrian partitions for $k=8$. Finally, we also prove that any square has a perfect Mondrian decomposition for $k\ge 7$.
\end{abstract}

\noindent\emph{Keywords:} Mondrian problem, non-congruent rectangles, dissection, digraph.

\noindent\emph{MSC 2010:} 05A18.

\section{Introduction}
Geometric problems involving polygon decompositions arise in many situations. As an example, we can mention the problem of partitioning the unit square into $k$ rectangles of given areas $A_1,\dots,A_k$ (such that $\sum_{i=1}^k A_i = 1$) involving a specific optimization function (for instance, minimizing the largest perimeter of the $k$ rectangles). These geometric problems are related to the design of parallel matrix-matrix multiplication algorithms (see Beaumont, Boudet, Rastello, and Robert \cite{BBRR20}). Usually, the complexity of this kind of problems is at least NP-hard,  and many of them are NP-complete. Therefore, a strong research line for such problems is the design of `good' algorithms and heuristics to approximate their solutions.

Nevertheless, some decomposition problems have a different behavior. Given a positive integer $k$, consider the problem of decomposing a unit square into $k$ rectangles of area $1/k$ such that the maximum of their perimeters is minimized. Kong, Mount, and Werman \cite{KMW87} provided an elegant and constructive exact solution for this problem, where the rectangles of the optimal decomposition have all rational side lengths. Besides, the Mondrian art problem (see Bassen \cite{Bassen16} and O'Kuhn \cite{okuhn2018mondrian}) consists of partitioning a square of side $n \in \mathbb{N}$ into non-congruent rectangles of natural side lengths, such that the difference between the largest and smallest area of all rectangles is minimum. Here two rectangles are congruent if their corresponding side lengths,  not necessarily in the same order, are equal. An optimal solution to this problem would be to find a partition where every rectangle would have equal area. In this case, the difference between the largest and smallest area (named `defect') would be zero and, hence, we would have minimum defect. So far, this optimal partition has not been found for any square of side length $n \in \mathbb{N}$, and it seems that  such partition does not exist (the work contained in O'Kuhn \cite{okuhn2018mondrian} is partial progress towards answering this question).  This specific problem seems to be very hard since, in addition to the geometric nature of the problem, the dimensions of the geometric elements must be natural numbers. 

In this paper, we generalize the Mondrian art problem to consider partitions with rectangles of real sides. The results for $k\le 7$ are studied in Section
\ref{s:first-results}. In Section \ref{s:general-proper}, we propose a general procedure to deal with greater values of $k$. Then, it is conjectured that such a procedure always yields a so-called `proper perfect Mondrian partition' of a square.
Some enumeration results are then presented in Section \ref{s:enumeration}, where we completely describe the possible cases of proper perfect Mondrian partitions for $k=7$ and $k=8$. In Section \ref{s:perfect-rectangle} we prove that any square has a perfect Mondrian decomposition for $k\ge 7$. Finally, in the last section, we apply these results to the optimization problems explained above.

\section{The Mondrian problem with $7$ rectangles with real sides}
\label{s:first-results}

Given two rectangles, $R_1$ and $R_2$ (whose sides are real numbers), they are congruent, $R_1\cong R_2$, if the lengths of the sides of both rectangles (not necessarily in the same order) are equal.

Given any rectangle $R$, let us call a \emph{$k$-partition} of $R$ any tiling of $R$ with $k$ rectangles $R_1,\dots,R_k$ with $k\ge 2$. A partition of a rectangle $R$, $\P=\{R_1,\dots ,R_k\}$, is called a \emph{Mondrian partition} of $R$ if $R_i\not \cong R_j$ for every $i\neq j$. A Mondrian partition $\P=\{R_1,\dots ,R_k\}$ of a rectangle $R$ is \emph{perfect} if $\area(R_i)=\frac{1}{k}\,\area(R)$ for every $i$. (Notice that we are not asking that the sides of the rectangles $R_i$ are natural numbers.)


\begin{definition}
A partition $\P=\{R_1,\dots ,R_k\}$ of a rectangle $R$ is \emph{admissible} if no pair of rectangles have a common side.
\end{definition}

If $\P$ is a perfect Mondrian partition of a square $S$, then there is no pair of rectangles with a common side. Otherwise, since they have the same area, they would be congruent. Thus, we have the following observation.

\begin{remark}
\label{r:nec}
Given a Mondrian partition $\P$ of a rectangle, being admissible is a necessary condition for being perfect.
\end{remark}

\begin{definition}
A partition $\P=\{R_1,\dots ,R_k\}$ of a rectangle $R$ is \emph{proper} if it does not contain a subset  $\P'\subset  \P$ with $1< |\P'|<k$ forming a rectangle $R'$ contained in $R$.
\end{definition}
For instance, a proper partition  $\P(R)=\{R_1,\dots ,R_k\}$, with $k>2$, must be admissible, and  cannot have a rectangle $R_i$ with sides contained in two opposite sides of $R$.

\begin{remark}
\label{r:nec2}
Let $\P=\{R_1,\dots ,R_k\}$ be a non-proper admissible Mondrian partition of a rectangle $R$. Then, $\P$ contains an admissible partition
$\P'=\{R'_1,\dots ,R'_{k'}\}$ of some rectangle $R'$ with $k'\le k$.
\end{remark}

As a consequence of the above remarks, we then have the following results.

\begin{lemma}
\label{p:adm}
There is no admissible partition $\P=\{R_1,\dots ,R_k\}$ of a rectangle $R$ with $k\leq 4$.
\end{lemma}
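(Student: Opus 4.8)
The plan is to argue by a reduction that lowers $k$, anchored at the trivial base case $k=2$, with the real work concentrated in a single configuration for $k=4$. First I would record two elementary facts about how the four corners of $R$ are distributed among the tiles. A tile containing two \emph{opposite} corners of $R$ must, being axis-aligned, equal $R$ itself, which is impossible for $k\ge 2$; and a tile containing two \emph{adjacent} corners necessarily spans the whole corresponding side of $R$. In the latter situation, peeling that tile off leaves a genuine rectangle $R'$ tiled by the remaining $k-1$ tiles, and this smaller partition is again admissible, since no two of the surviving tiles can share a full side if none did before. This supplies the reduction step.

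With this in hand, the cases $k=2,3$ and the easy part of $k=4$ follow quickly. The base case $k=2$ is immediate: a rectangle split into two rectangles is a single straight cut, so the two tiles share the entire cut as a common side and the partition is not admissible. For $k=3$, the pigeonhole principle forces some tile to contain two of the four corners; by the facts above these are adjacent, the tile spans a full side, and peeling it off yields an admissible $2$-partition of a rectangle, contradicting the base case. For $k=4$, if some tile contains two corners the same peeling argument reduces to the (already impossible) admissible $3$-partition.

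The main obstacle, and the only genuinely new case, is $k=4$ when each of the four tiles contains exactly one corner of $R$, a pinwheel-type assignment that the counting argument no longer excludes. Here I would first show that each side of $R$ is met by exactly its two incident corner-tiles: any other tile reaching that side would have its edge run along a full side of $R$ and thus capture a second corner, contradicting the one-corner-per-tile hypothesis. Consequently the top, bottom, left and right sides are each split at a single interior point, at abscissas $p,q$ and ordinates $r,s$ respectively, and this pins down all four tiles as explicit rectangles determined by $p,q,r,s$.

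The proof then closes with a short area computation: writing the four tile areas in terms of $p,q,r,s$ and equating their sum to $\area(R)$, all the bulk terms cancel and the identity collapses to $(q-p)(r-s)=0$. Hence $p=q$ or $r=s$, and in either case two of the corner-tiles are forced to align along a complete segment, so they share a full common side, contradicting admissibility. I expect the bookkeeping that each side is met by precisely two tiles, together with checking that the area cancellation really does reduce to a single product, to be the delicate part, while the reductions in the earlier cases are routine.
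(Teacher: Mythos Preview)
Your proof is correct and takes a genuinely different route from the paper's. The paper dismisses $k\le 3$ as trivial and, for $k=4$, picks any two adjacent tiles $R_1,R_2$ (which by admissibility do not share a full side) and argues by a short local case split: either $R_1\cup R_2$ already covers two full sides of $R$, forcing $R_3\cup R_4$ to be a rectangle cut by a single segment, or one of $R_3,R_4$ must complete a side of $R_1$ or $R_2$. You instead organise everything around the four corners of $R$: a tile holding two corners lets you peel off a full side and drop to $k-1$, and the residual one-corner-per-tile pinwheel is killed by the neat area identity $(q-p)(r-s)=0$. Your approach is more systematic and fully written out, and the area computation is an elegant way to close the pinwheel case without further geometric casework; the paper's argument is terser but leans more on the reader's intuition. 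One minor remark: the phrase ``would have its edge run along a full side of $R$'' is slightly loose---what you actually use is that such a tile would span the full height (or width) of $R$ and hence contain a second corner, which is exactly the right observation.
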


\begin{proof}
If $k\leq 3$, it is trivial. If $k=4$, suppose $R_1$ and $R_2$ are two adjacent rectangles of the partition such that their intersection is not a common side. Then, either $R_3\cup R_4$ is a rectangle (if two sides of $R$ are contained in $R_1\cup R_2$) and $R_3$ and $R_4$ have a common side, or either $R_3$ or $R_4$ has a common side with $R_1$ or $R_2$.
\end{proof}

\begin{lemma}
\label{l:2intersec} If $\P=\{R_1,\dots, R_k\}$ is a partition of a rectangle $R$ with $k\leq 7$ such that no rectangle $R_i$ intersects two opposite sides of $R$, then there is a side of $R$ that intersects exactly two rectangles in $\P$.
\end{lemma}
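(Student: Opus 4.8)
The plan is to run a counting argument on the rectangles that meet the boundary of $R$. Throughout, I take all the rectangles $R_i$ to be axis-aligned with $R$, so that the hypothesis ``no $R_i$ intersects two opposite sides of $R$'' translates into: no $R_i$ spans the full width of $R$, and no $R_i$ spans the full height of $R$. For each of the four sides $s$ of $R$ (bottom, top, left, right), let $n_s$ denote the number of rectangles of $\P$ that meet $s$. Since the edges of the rectangles meeting $s$ tile $s$, the number $n_s$ equals the number of pieces into which $s$ is subdivided. The first thing I would record is the easy lower bound $n_s\ge 2$: if a single rectangle met a whole side, it would span $R$ in that direction and hence intersect the two opposite sides, which is excluded.

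Next I would analyze which rectangles meet more than one side. Each of the four corners of $R$ belongs to exactly one rectangle of $\P$, and no rectangle can contain two corners, because two corners determine a full side of $R$, so a rectangle containing both would span $R$ in one direction. Hence there are exactly four distinct \emph{corner rectangles}, and each of them meets precisely the two adjacent sides through its corner. Moreover, no rectangle meets three or more sides, since meeting three sides again forces spanning in some direction. Therefore the only rectangles counted more than once in the sum $n_B+n_T+n_L+n_R$ are the four corner rectangles, each counted exactly twice, and consequently the number of rectangles of $\P$ meeting the boundary of $R$ equals $n_B+n_T+n_L+n_R-4$.

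Finally I would argue by contradiction. Suppose no side of $R$ meets exactly two rectangles; combined with $n_s\ge 2$ this gives $n_s\ge 3$ for all four sides. Then the number of boundary rectangles is $n_B+n_T+n_L+n_R-4\ge 12-4=8$, and since these are distinct rectangles of $\P$ we obtain $k\ge 8$, contradicting $k\le 7$. Hence some side $s$ satisfies $n_s\le 2$, and together with $n_s\ge 2$ this forces $n_s=2$, which is the desired conclusion.

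The routine lower bound $n_s\ge 2$ and the concluding contradiction are immediate; the step requiring care, and the one on which the whole argument rests, is the exact formula $n_B+n_T+n_L+n_R-4$ for the number of boundary rectangles. Establishing it cleanly amounts to verifying that the only multiply-counted rectangles are the four corner ones, which is exactly where the hypothesis on opposite sides enters (to forbid a rectangle meeting three sides or containing two corners). Once that bookkeeping is pinned down, the fact that the threshold $8$ lies strictly above $k\le 7$ closes the proof.
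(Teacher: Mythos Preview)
Your proof is correct and follows essentially the same counting argument as the paper: both first observe that $n_s\ge 2$ for every side, then note that a rectangle meets two sides of $R$ if and only if it contains a corner (so exactly four rectangles are double-counted), and conclude that if every side met at least three rectangles one would need $k\ge 8$. Your write-up is more explicit about the bookkeeping formula $n_B+n_T+n_L+n_R-4$, but the idea is identical.
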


\begin{proof}
Since no rectangle $R_i$ intersects two opposite sides of $R$, then every side of $R$ intersects at least two rectangles in $\P$. Suppose every side in $R$ intersects at least three rectangles in $\P$. Since no rectangle $R_i$ intersects two opposite sides of $R$, a rectangle $R_i$ intersects two different sides if and only if $R_i$ intersects one of the four corners of $R$. Thus, $\P$ has at least 8 rectangles.
\end{proof}

\begin{proposition}
\label{p:admis5}
There is no perfect Mondrian $5$-partition $\P=\{R_1,\dots ,R_5\}$ of a rectangle $R$. Moreover, there is a unique (up to symmetry) perfect admissible $5$- partition of $R$.
\end{proposition}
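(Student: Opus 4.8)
The plan is to classify every perfect admissible $5$-partition of $R=[0,W]\times[0,H]$ and to check that it must contain a pair of congruent rectangles. Since a perfect Mondrian partition is admissible by Remark~\ref{r:nec}, both assertions then follow at once: the uniqueness is exactly the classification, and the non-existence of a perfect Mondrian $5$-partition follows because the unique perfect admissible one turns out not to be Mondrian. Write $A=\tfrac15\area(R)$. I would first rule out that any piece meets two opposite sides of $R$: such a piece is a full-width or full-height strip whose complement is one or two clean sub-rectangles, each tiled by at most four of the remaining pieces. A sub-rectangle filled by a single piece would make that piece share a full edge with the strip, violating admissibility, while a sub-rectangle tiled by two to four pieces cannot be tiled admissibly by Lemma~\ref{p:adm}, so two of its pieces share a common side---again impossible. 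Hence no piece spans opposite sides, and since $k=5\le 7$, Lemma~\ref{l:2intersec} yields a side, say the bottom, meeting exactly two pieces $R_1=[0,p]\times[0,h_1]$ and $R_2=[p,W]\times[0,h_2]$.

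Next I would read off the combinatorial skeleton. As no piece spans opposite sides, no piece can contain two corners of $R$ (two adjacent corners force it to meet two opposite sides, and two opposite corners force it to equal $R$), so exactly four pieces each carry one corner and the remaining piece---the \emph{center}---carries none. Each corner piece meets precisely the two sides through its corner, and the center meets at most one side (meeting two would create a corner or a spanning pair). Counting side--piece incidences gives $\sum_{\text{sides}}m_i=8+c$ with $c\in\{0,1\}$ the number of sides met by the center; since every side meets at least two pieces and the bottom meets exactly two, either $c=0$ and all four sides meet exactly two pieces, or $c=1$ and a single side meets three.

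The crux is to exclude $c=1$. Here I would first record that $h_1\neq h_2$: if $h_1=h_2$, the line $y=h_1$ cuts off a clean rectangle tiled by the three remaining pieces, which is impossible admissibly by Lemma~\ref{p:adm}. After rotating so that the center meets the top (and horizontally reflecting if needed so that $h_1>h_2$), I would examine the forced coverage column by column. I expect every admissible placement of the center against the top to leave an uncoverable gap in the horizontal band between heights $h_2$ and $h_1$, or else to align two edges into a segment that is a full side of both pieces---an inadmissible common side. Checking the few orderings of the relevant abscissae is where the real work lies, and I expect each to end in one of these two contradictions, forcing $c=0$.

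With $c=0$ every side meets exactly its two corner pieces, which rigidly fixes the pinwheel $R_1=[0,p]\times[0,s]$, $R_2=[p,W]\times[0,q]$, $R_3=[r,W]\times[q,H]$, $R_4=[0,r]\times[s,H]$ and center $R_5=[r,p]\times[s,q]$. Imposing $\area(R_i)=A$ for $i=1,2,3,4$ (the fifth equality being automatic, as the five pieces tile $R$) and setting $P=p/W$, the system collapses to $5P^2-5P+1=0$, whose two roots are interchanged by $P\mapsto 1-P$, that is, by a reflection of $R$; this gives a unique perfect admissible $5$-partition up to symmetry, valid for every aspect ratio since $P$ is independent of $W/H$. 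Finally these relations give $R_1\cong R_3$ and $R_2\cong R_4$, so the partition is not Mondrian, and therefore no perfect Mondrian $5$-partition of $R$ exists.
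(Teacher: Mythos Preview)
Your argument is correct and follows essentially the same route as the paper: rule out spanning pieces via Lemma~\ref{p:adm}, use Lemma~\ref{l:2intersec} together with a short case analysis to force the pinwheel configuration with one interior rectangle, and then solve the resulting system to get $5P^2-5P+1=0$, whose two symmetric solutions both yield $R_1\cong R_3$ and $R_2\cong R_4$. Your corner--incidence count reducing to $c\in\{0,1\}$ is a tidy repackaging of what the paper handles directly (``if a side meets three rectangles, it is immediate to check\dots''), and your admittedly sketched exclusion of $c=1$ is at the same level of detail as the paper's own treatment of that step.
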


\begin{proof}
Suppose $\P=\{R_1,\dots ,R_5\}$ is a Mondrian partition of a rectangle $R$ and, for some $i$,
$R_i$ intersects opposite sides of $R$. Then, $R\setminus R_i$ contains a rectangle $R'$ with a partition $\P'\subset \P$ of $R'$ with less than five rectangles. Thus, by Lemma \ref{p:adm}, $\P'$ is not admissible, and, by Remark \ref{r:nec}, $\P$ is not perfect.\\
Let $\P=\{R_1,\dots ,R_5\}$ be an admissible partition of a rectangle  $R$.
We may assume that no rectangle $R_i$ intersects opposite sides of $R$. Then, by Lemma \ref{l:2intersec}, there is a side of $R$, which intersects exactly two rectangles in $\P$. Moreover, suppose there is a side of $R$ that intersects three rectangles in $\P$. Then, either two of them share a common side and the partition is not admissible, or it is immediate to check that it is impossible to form a rectangle with only two more rectangles. Thus, every side of $R$ intersects exactly two rectangles in $\P$, and there is a rectangle in $\P$, say $R_5$, that does not intersect any side of $R$. See Figure \ref{F: Case5a}.

\begin{figure}[ht]
\centering
\includegraphics[scale=0.5]{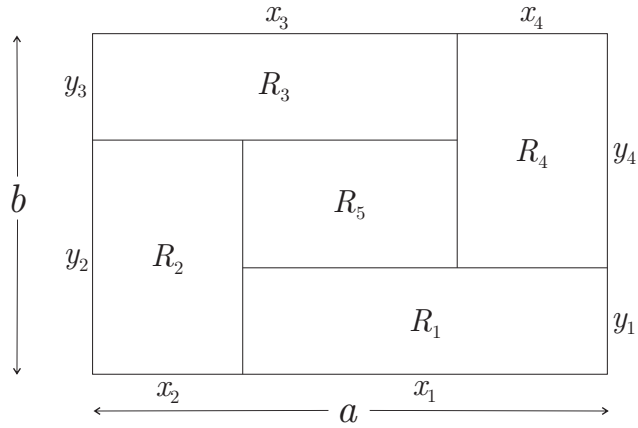}
\vskip-8cm
\caption{An admissible partition of a rectangle in five rectangles has one inner rectangle.}
\label{F: Case5a}
\end{figure}

Suppose $R$ has length sides $a,b\in \RR$, and $\P=\{R_1,\dots ,R_5\}$ is a perfect admissible partition of $R$, that is, $\area(R_i)=\frac{ab}{5}$ for every $i$. Consider $R_i$ with sides of length $x_i$ and $y_i$, as in Figure \ref{F: Case5a}. Then, $x_iy_i=\frac{a b}{5}$ for every $i$, and it is straightforward that $y_2$ depends on $x_1$, $x_3$ on $y_2$, and therefore on $x_1$, and so on. Thus, we have the following equations:
\[
\begin{aligned}
x_2 = & a-x_1, \quad \qquad \qquad \qquad y_2 = \frac{ab}{5x_2}=\frac{ab}{5a-5x_1};\\
y_3 = & b-y_2=\frac{4ab-5bx_1}{5a-5x_1}, \quad x_3= \frac{ab}{5y_3}=\frac{a(a-x_1)}{4a-5x_1};\\
x_4= & a-x_3 =\frac{3a^2-4ax_1}{4a-5x_1}, \quad y_4= \frac{ab}{5x_4}=\frac{4ab-5bx_1}{15a-20x_1}; \\
y_1= & b-y_4 =\frac{11ab-15bx_1}{15a-20x_1},\\
y_1= & \frac{ab}{5x_1}.
%
\end{aligned}
\]
Then, from the last two equations, we know that
\begin{equation}\label{eq:x1_5}
\frac{11ab-15bx_1}{15a-20x_1}=\frac{ab}{5x_1} \qquad \Leftrightarrow \qquad  5x_1^2-5ax_1+a^2=0,
\end{equation}
and there are only two solutions for this equation: $x_1=\frac{5+\sqrt{5}}{10}a$ or $x_1=\frac{5-\sqrt{5}}{10}a$.\\
Thus, if $x_1=\frac{5+\sqrt{5}}{10}a$, we have:
\begin{align*}
x_1 & =\frac{5+\sqrt{5}}{10}a=x_3, \quad y_1=\frac{5-\sqrt{5}}{10}b=y_3, \\
x_2 & =\frac{5-\sqrt{5}}{10}a=x_4, \quad y_2 =\frac{5+\sqrt{5}}{10}b=y_4.
\end{align*}
If $x_1=\frac{5-\sqrt{5}}{10}a$, we have:
\begin{align*}
x_1 & =\frac{5-\sqrt{5}}{10}a=x_3, \quad y_1=\frac{5+\sqrt{5}}{10}b=y_3, \\
x_2 & =\frac{5+\sqrt{5}}{10}a=x_4, \quad y_2=\frac{5-\sqrt{5}}{10}b=y_4.
\end{align*}
Notice that, in both cases, we have that $R_1\cong R_3$ and $R_2\cong R_4$, and  hence $\P$ is not Mondrian.
Besides, these two solutions are symmetric. See Figure \ref{F: Case5b}.

\begin{figure}[ht]
\centering
\includegraphics[scale=0.6]{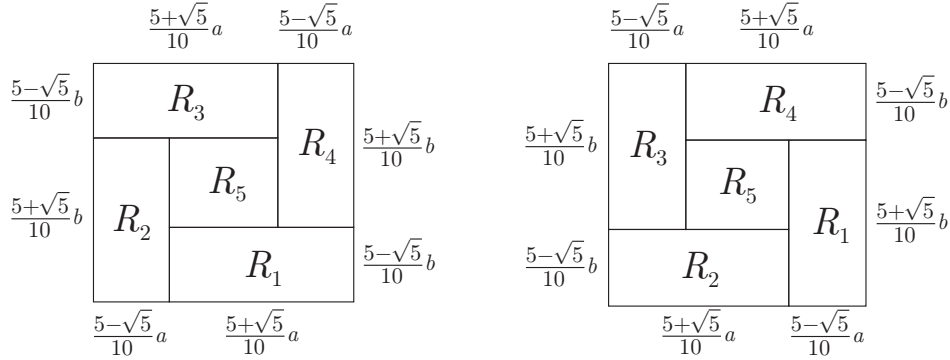}
\vskip-11.75cm
\caption{The unique (up to symmetry) perfect admissible partition of a rectangle into $k=5$ rectangles.}
\label{F: Case5b}
\end{figure}
\end{proof}

\begin{proposition}
There is no perfect Mondrian $6$-partition $\P=\{R_1,\dots ,R_6\}$ of a square $S$.
\end{proposition}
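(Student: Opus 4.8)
The plan is to argue by contradiction, following the template of Proposition~\ref{p:admis5}. Suppose $\P=\{R_1,\dots,R_6\}$ is a perfect Mondrian $6$-partition of a square $S$ of side $a$; by Remark~\ref{r:nec} it is admissible. The first step is to show that no rectangle of $\P$ intersects two opposite sides of $S$. Indeed, if some $R_i$ did, then up to rotation it would be a full vertical strip $R_i=[c,d]\times[0,a]$, splitting the complement into a left part $L=[0,c]\times[0,a]$ and a right part $M=[d,a]\times[0,a]$, tiled by $\ell$ and $m$ rectangles of $\P$ with $\ell+m=5$. If $R_i$ is a boundary strip ($c=0$ or $d=a$), one part is empty and the other is a rectangle of area $\tfrac56 a^2$ tiled by the five remaining (pairwise non-congruent) rectangles, each of area $\tfrac16 a^2=\tfrac15\cdot\tfrac56 a^2$; this is a perfect Mondrian $5$-partition, contradicting Proposition~\ref{p:admis5}. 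Otherwise both parts are nonempty, so the smaller has $1$ or $2$ tiles: a single tile shares its full side (of length $a$) with $R_i$, and two tiles must meet along a full common edge, so admissibility fails in either case. Hence no rectangle of $\P$ meets two opposite sides.

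With this established, Lemma~\ref{l:2intersec} (applicable since $k=6\le 7$) yields a side of $S$, say the bottom, meeting exactly two rectangles, namely the two bottom-corner rectangles $R_1=[0,c]\times[0,h_1]$ and $R_2=[c,a]\times[0,h_2]$. Admissibility forces $h_1\neq h_2$ (equal heights would make their top edges share a side), and after a possible left--right reflection I may assume $h_1<h_2$. The uncovered region $U=S\setminus(R_1\cup R_2)$ is then the $L$-shaped union of the top band $[0,a]\times[h_2,a]$ and the left block $[0,c]\times[h_1,h_2]$, and it must be tiled by the four remaining rectangles $R_3,\dots,R_6$.

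The core of the argument is to enumerate the admissible $4$-rectangle tilings of this $L$-region and discard each one. I would organize the enumeration around two local features: the reflex corner $P=(c,h_2)$ of $U$, at which some tile must have a corner or through-edge, and the top side of $S$, which (no tile spanning opposite sides) meets two, three, or four of the tiles. Each choice fixes the adjacency pattern of $R_3,\dots,R_6$, and only finitely many patterns survive the no-common-side and no-opposite-side restrictions. For every surviving pattern one writes the six area equations $x_iy_i=\tfrac16 a^2$ together with the sum-of-lengths relations along shared edges, exactly as in the chain of substitutions used in Proposition~\ref{p:admis5}.

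Solving each system, I expect the same rigidity seen for $k=5$: every admissible real solution forces two of the rectangles to have the same pair of side lengths, hence to be congruent, contradicting the Mondrian hypothesis (while some patterns will have no admissible real solution at all). The main obstacle is precisely this casework: one must verify that the list of admissible $4$-tilings of the $L$-region is complete and then carry each area computation far enough to exhibit the forced congruence or the infeasibility. Once every case is eliminated, no perfect Mondrian $6$-partition of $S$ exists.
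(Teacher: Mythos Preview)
Your setup through the application of Lemma~\ref{l:2intersec} is correct and matches the paper's. The gap is in the ``core'' step: you plan to enumerate admissible $4$-tilings of the $L$-region and then solve the area equations for each, expecting forced congruences as in the $k=5$ case. You never carry this out, and more importantly you are anticipating work that is not needed.

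The paper's proof is purely combinatorial: for $k=6$ (unlike $k=5$) \emph{no} configuration survives the admissibility filter, so the area equations never enter. Concretely, once the top side meets exactly two rectangles $R_5,R_6$ with $y_5<y_6$, the paper argues that the lower side of $R_5$ must meet exactly two further rectangles $R_3,R_4$ (one or three both force a shared side). Only two rectangles remain, and a short case split on how many of them meet the lower side of $R_6$ shows that some pair among $R_1,\dots,R_4$ must share a full side in every case. Thus the contradiction is with admissibility itself, not with any equal-area rigidity.

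Your approach would in principle succeed if the casework were completed, but the expectation that ``every admissible real solution forces two of the rectangles to be congruent'' is off: the point is that the list of admissible patterns is empty. Recognising this collapses the argument to a few lines and avoids the algebra entirely.
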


\begin{proof}
By Remark \ref{r:nec}, any perfect Mondrian partition must be admissible. Suppose $\P$ is an admissible Mondrian partition of $S$ and let us see that it is not perfect.\\
If there is a rectangle $R_i$ such that $R_i$ intersects two opposite sides of the square, then
$\P\setminus \{R_i\}$ contains an admissible partition of a rectangle contained in $S\setminus R_i$ with at most five rectangles. Thus, by  Proposition \ref{p:admis5}, this partition is not perfect.\\
Therefore, we may assume that, for every $i$, $R_i$ does not intersect two opposite sides of $S$. By Lemma \ref{l:2intersec}, there is a side of $S$, suppose it is the upper one, which intersects exactly two rectangles, say $R_5,R_6$. Since the partition $\P$ is admissible, the intersecting sides of $R_5$ and $R_6$ do not have the same length. Therefore, if $y_5$ and $y_6$ are the lengths of the intersecting sides of $R_5$ and $R_6$ respectively, 
we may assume, without loss of generality, that $y_5<y_6$. \\
Since the partition has six rectangles, it is straightforward that the lower side of $R_5$ cannot intersect more than two rectangles. Otherwise, since there is at most one rectangle left, at least two of them share a vertical side, leading to a contradiction. Also, it is immediate to check that it is not possible that the lower side of $R_5$ intersects only one rectangle since this would imply that this side is common, and the partition would not be admissible. Therefore, the lower side of $R_5$ intersects exactly two rectangles, say $R_3,R_4$, leaving two rectangles left.\\
\noindent Case 1. Suppose that the lower side of $R_6$ intersects two rectangles, $R_1$ and $R_2$.  Then, $R_1$ and $R_2$ share a vertical side leading to a contradiction. See Figure \ref{F: Case6-2}.

\begin{figure}[ht]
\centering
\includegraphics[scale=0.6]{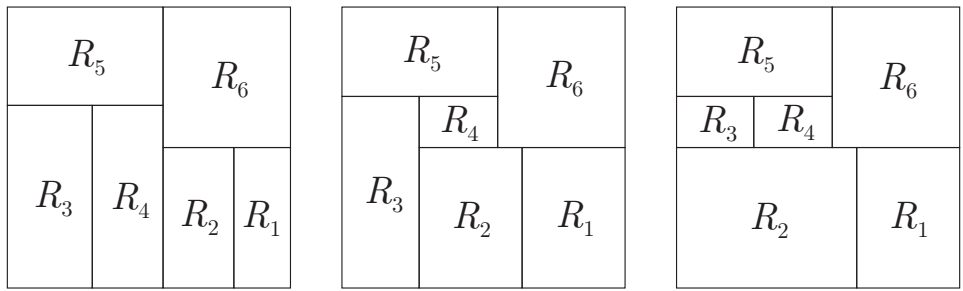}
\vskip-12.5cm
\caption{The lower side of $R_6$ intersects both rectangles left, $R_1$ and $R_2$.}
\label{F: Case6-2}
\end{figure}

\noindent Case 2. Suppose that the lower side of $R_6$ intersects only one rectangle, say $R_1$. Since $R_6$ and $R_1$ cannot have a common side, the horizontal side of $R_1$ is longer than the horizontal side of $R_6$. If the upper side of $R_1$ intersects the lower sides of $R_3$ and $R_4$, then necessarily $R_3$ and $R_4$ have a common side, see Figure \ref{F: Case6-1a}.

\begin{figure}[ht]
\centering
\includegraphics[scale=0.6]{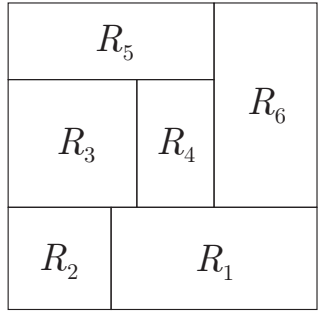}
\vskip-12cm
\caption{The upper side of $R_1$ intersects the lower sides of $R_3$ and $R_4$.}
\label{F: Case6-1a}
\end{figure}

Otherwise, it is immediate to check that $R_2$ has a common side either with $R_1$ or $R_3$, leading to a contradiction. See Figure \ref{F: Case6-1b}.

\begin{figure}[ht]
\centering
\includegraphics[scale=0.6]{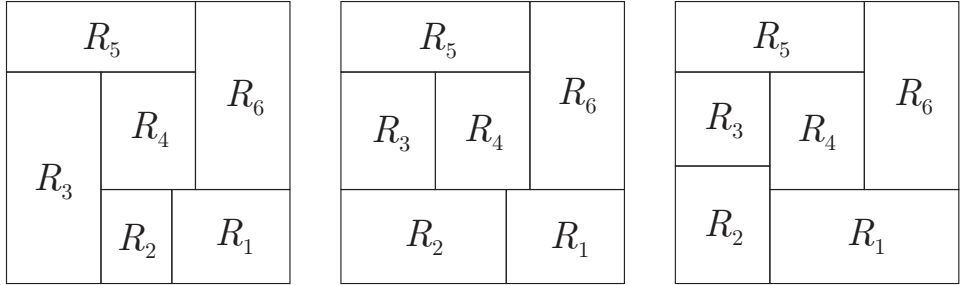}
\vskip-12.25cm
\caption{The upper side of $R_1$ does not intersect the lower side of $R_3$.}
\label{F: Case6-1b}
\end{figure}
\end{proof}

\begin{theorem}\label{T:div7}
Given a square $S$, there is a perfect Mondrian partition $\P(S)=\{R_1,R_2,\dots, R_7\}$.
\end{theorem}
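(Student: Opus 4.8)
The statement is an existence claim, so the plan is to construct one explicit dissection of the square into seven non-congruent rectangles of equal area; the earlier results for $k=5,6$ show this is the first value of $k$ where success is possible, and the $k=5$ analysis is a warning that symmetric layouts force congruent pairs (there $R_1\cong R_3$ and $R_2\cong R_4$ were unavoidable). Hence the first and most important step is to fix a combinatorial layout of seven rectangles that is proper and admissible and, crucially, \emph{asymmetric} enough to preclude any forced congruence. Guided by Lemma~\ref{l:2intersec}, I would look for a layout in which no rectangle meets two opposite sides, every side of $S$ meets exactly two rectangles, and at least one rectangle lies in the interior; a pinwheel/spiral arrangement winding around one or two interior rectangles is the natural candidate, since it is fault-free (hence proper) yet admits more pieces than the rigid $k=5$ pinwheel.

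Once the layout is fixed, I would normalize $S=[0,1]^2$ and introduce coordinates for the interior cut points, writing the two side lengths $x_i,y_i$ of each $R_i$ in terms of a small number of free parameters. As in the $k=5$ proof, the adjacency relations let one express each new dimension from the previous ones, so that propagating around the configuration expresses every $x_i,y_i$ in terms of essentially one parameter. Imposing the seven area conditions $\area(R_i)=\frac{1}{7}$ then successively eliminates variables and collapses, exactly as equation~\eqref{eq:x1_5} did for $k=5$, to a single polynomial equation in that one parameter. I would solve this polynomial and retain a root in the open interval keeping all cut points correctly ordered inside $(0,1)$, so that the configuration is geometrically realizable.

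With explicit (in general algebraic) values for all side lengths, the final step is to verify pairwise non-congruence by listing the unordered pairs $\{x_i,y_i\}$ and checking that no two coincide; this is precisely where the asymmetry built into the layout must do its work, guaranteeing that the solution is genuinely Mondrian rather than collapsing to matched pairs as in Proposition~\ref{p:admis5}. Since we only need existence, exhibiting a single admissible root yielding seven distinct shapes suffices.

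The main obstacle is the interaction between the last two steps: one must choose the combinatorial layout in the first step so that the area polynomial has a real root in the valid range and, simultaneously, so that this root does not accidentally equate two of the side-length pairs. Ruling out such hidden congruences --- rather than merely solving the area system --- is the delicate point, and it is what distinguishes the successful $k=7$ case from the failures at $k=5$ and $k=6$.
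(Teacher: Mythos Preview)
Your proposal is correct and follows essentially the same approach as the paper: fix an asymmetric spiral layout (the paper's Figure~\ref{F: Case7b}), express all dimensions in terms of the single parameter $x_1$ via the adjacency and area constraints, reduce to the cubic $(7x_1-5)(15x_1^2-16x_1+3)=0$, and then verify that the root $x_1=(8+\sqrt{19})/15$ gives all lengths positive and the seven unordered pairs $\{x_i,y_i\}$ pairwise distinct. The paper carries out exactly these steps explicitly, so your plan succeeds once the concrete layout and computations are written down.
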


\begin{proof}
First, let us consider the admissible partition of a square given in Figure \ref{F: Case7b}.

\begin{figure}[h]
\centering
\includegraphics[scale=0.6]{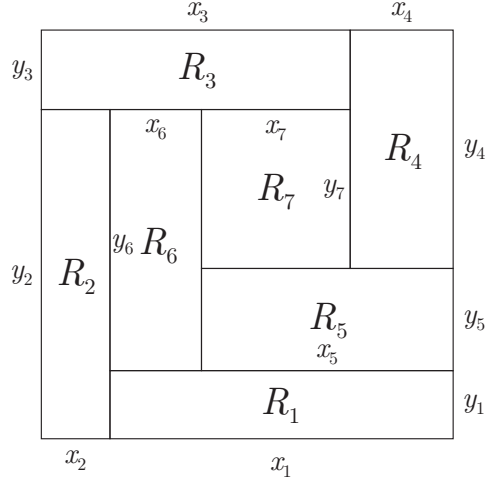}
\vskip-10cm
\caption{Admissible partition of a square with 7 rectangles.}
\label{F: Case7b}
\end{figure}

Let us assume that the side of the square is 1. Our partition  can be easily built so that $R_1,R_2,R_3,R_4$ and $R_5$ have area $\frac17$, that is, $x_iy_i=\frac{1}{7}$ for every $i\in \{1,2,3,4,5\}$. Then,  we have the following equations, where $x_2,x_3,x_4$  depend on $x_1$:
\[
\begin{aligned}
x_2 = & 1-x_1; \qquad \qquad \quad y_2=\frac{1}{7x_2}=\frac{1}{7-7x_1},\\
y_3= & 1-y_2=\frac{6-7x_1}{7-7x_1}, \quad x_3=\frac{1}{7y_3}=\frac{1-x_1}{6-7x_1},\\
x_4= & 1-x_3= \frac{5-6x_1}{6-7x_1},\quad
y_4=\frac{1}{7x_4}= \frac{6-7x_1}{7(5-6x_1)}.
\end{aligned}
\]
Moreover, if  $y_1+y_4<1$ $(\Rightarrow y_5>0)$, we have
\[
y_5=  1-y_1-y_4=\frac{5(7x_1^2-7x_1+1)}{7x_1(6x_1-5)},\quad
 x_5=\frac{1}{7y_5}=\frac{6x_1^2-5x_1}{35x_1^2-35x_1+5}.  \\
\]
Now, if the following conditions are also satisfied:
\begin{itemize}
	\item[$(i)$] $x_2+x_5<1$ $(\Rightarrow x_6>0)$,
	\item[$(ii)$] $y_1+y_3<1$ $(\Rightarrow y_6>0)$,
	\item[$(iii)$] $x_3>x_2+x_6$ $(\Rightarrow x_7>0)$,
    \item[$(iv)$] $y_4>y_3$ $(\Rightarrow y_7>0)$,
\end{itemize}
we can complete the partition of Figure \ref{F: Case7b}, and we have
\[
\begin{aligned}
x_6 = & 1-x_2-x_5=
\frac{35x_1^3-41x_1^2 +10x_1}{35x_1^2-35x_1+5},\quad y_6=1-y_1-y_3=\frac{1-2x_1}{7x_1(x_1-1)}.\\
\end{aligned}
\]
To prove that $R_6$ and $R_7$ also have area $\frac17$, it suffices to require that $x_6y_6=\frac{1}{7}$, which gives
\begin{equation*}\label{eq:R5}
\begin{aligned}
 & \left(\frac{35x_1^3-41x_1^2 +10x_1}{35x_1^2-35x_1+5}\right)\left(\frac{1-2x_1}{7x_1(x_1-1)}\right) =\frac{1}{7}\quad \
 \Leftrightarrow \quad
 105x_1^3-187x_1^2+101x_1-15  =0\\
  & \Leftrightarrow \quad
(7x_1-5)(15x_1^2-16x_1+3)= 0.
\end{aligned}
\end{equation*}

Thus, this last equation has three solutions: $s_1=\frac{5}{7}$,  $s_2=\frac{8-\sqrt{19}}{15}\approx 0.24274$, and $s_3=\frac{8+\sqrt{19}}{15}\approx 0.82393$.

It is immediate to check that solutions $s_1$ and $s_2$  do not produce the partition in Figure~\ref{F: Case7b}. Indeed,  if $x_1=s_1$, then $R_1\cong R_4$, $R_2\cong R_3$, and $x_7$ and $y_7$ have negative values; whereas,  if  $x_1=s_2$,  then $x_6$, $y_6$, $x_7$, and $y_7$ are also negative.

On the other hand, it is trivial to check (via computation) that the solution $x_1=s_3$ satisfies all the conditions.
To finish the proof, it suffices to check that $R_i\not \cong R_j$ for every $i\neq j$.  An approximation of the solution with $x_1=\frac{8+\sqrt{19}}{15}$ gives the values for $x_i$ and $y_i$ in Table \ref{t:sol-aprox(k=7)}.

\begin{table}[ht]
\centering
\begin{tabular}{|l|l|}
\hline\hline
$x_1 = 0.8239265962$  & $y_1 = 0.1733857646$\\
\hline
$x_2 =  0.1760734038$  & $y_2 =0.8113499245$\\
\hline
$x_3 = 0.7572599296,$ & $y_3 =0.1886500755$  \\
\hline
$x_4 = 0.2427400704$ & $y_4 =0.5885189973$\\
\hline
$x_5 =  0.6000000000$ & $y_5 = 0.2380952381$ \\
\hline
$x_6 = 0.2239265962 $ & $y_6 = 0.6379641599$ \\
\hline
$x_7 =0.3572599296$ & $y_7 =  0.3998689219$\\
\hline\hline
\end{tabular}
\caption{Approximate values for the partition with $k=7$.}
\label{t:sol-aprox(k=7)}
\end{table}

Thus, $x_1=\frac{8+\sqrt{19}}{15}$ gives a perfect Mondrian partition of the square with side length 1.
\end{proof}

\begin{table}[ht]
\centering
\begin{tabular}{|l|l|}
\hline\hline
$x_1 =  \frac{8+\sqrt{19}}{15}$ & $y_1 =  \frac{8-\sqrt{19}}{21} $ \\[.1cm]
\hline
$x_2 = \frac{7-\sqrt{19}}{15} $ & $y_2 = \frac{7+\sqrt{19}}{14}$   \\[.1cm]
\hline
$x_3 =\frac{7+\sqrt{19}}{15}$ & $y_3 = \frac{7-\sqrt{19}}{14} $ \\[.1cm]
\hline
$x_4 =  \frac{8-\sqrt{19}}{15}$ & $y_4 = \frac{8+\sqrt{19}}{21}$ \\[.1cm]
\hline
$x_5 = \frac{3}{5}$ & $y_5 = \frac{5}{21}$  \\[.1cm]
\hline
$x_6 =  \frac{-1+\sqrt{19}}{15} $ & $y_6 = \frac{5+5\sqrt{19}}{42}$ \\[.1cm]
\hline
$x_7 =  \frac{1+\sqrt{19}}{15}$ & $y_7 =  \frac{-5+5\sqrt{19}}{42} $ \\[.1cm]
\hline\hline
\end{tabular}
\caption{Exact values for the partition with $k=7$.}
\label{t:sol-exact(k=7)}
\end{table}

Table \ref{t:sol-exact(k=7)} shows the exact values of the approximations in Table \ref{t:sol-aprox(k=7)}. Notice that all the dimensions are irrational numbers, except for the ones of $R_5$.


\section{A general procedure}
\label{s:general-proper}

Generalizing the above section, we will give a general procedure to get a (proper) perfect Mondrian decomposition of a unit square into $k(\ge 7)$ rectangles.
This is obtained by means of the `spiral' pattern shown in Figure \ref{F: Case12} for $k=12$. Note that the partition of this figure is neither Mondrian (for instance, $R_1\cong R_2\cong R_3$) nor perfect, but the equations to be solved are the same when  considering such a pattern.
Moreover, notice that, for any rectangle $R$, this pattern provides a  proper admissible  partition $\P=\{R_1,\dots ,R_k\}$ of $R$ if and only if $k= 5$ or $k\geq 7$.

\begin{figure}[t]
	\centering
	\includegraphics[scale=0.5]{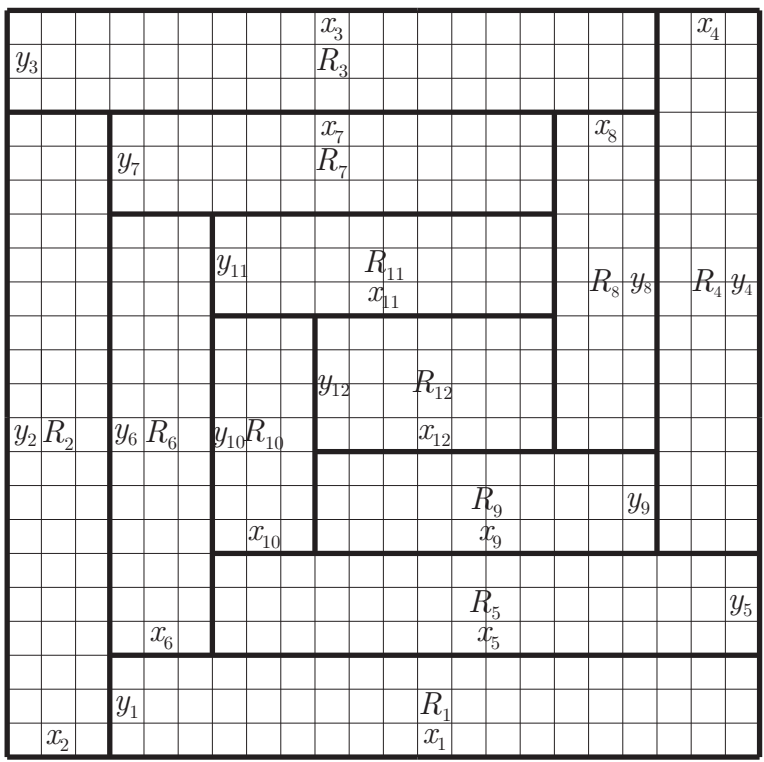}
	\vskip-3.5 cm
	\caption{The case with $k=12$.}
	\label{F: Case12}
\end{figure}

To establish the last equation to be solved, we proceed by imposing the right area $1/k$ of the rectangles following the spiral order: $R_1,R_2,R_3,\ldots, R_{k-1}$. Notice that, if this holds, the area of $R_k$ must also be $1/k$.
Then, the successive equalities, starting from step $1$, $x_1=x$ and  $y_1=\frac{1}{kx_1}$, are shown in Table \ref{t:eqs-general-proc}.

\begin{table}
\begin{center}
\begin{tabular}{|c|c|c|}
\hline\hline
Step & 1rt side & 2nd side \\
\hline\hline
$1$ & $x_1=x$ & $y_1=\frac{1}{kx_1}$\\[.1cm]
\hline
$2$ & $x_2=1-x_1$ & $y_2=\frac{1}{kx_2}$\\[.1cm]
\hline
$3$ & $y_3=1-y_2$ & $x_3=\frac{1}{ky_3}$\\[.1cm]
\hline
$4$ & $x_4=1-x_3$ & $y_4=\frac{1}{kx_4}$\\[.1cm]
\hline
$5$ & $y_5=1-y_1-y_4$ & $x_5=\frac{1}{ky_5}$\\[.1cm]
\hline
$6$ & $x_6=1-x_2-x_5$ & $y_6=\frac{1}{kx_6}$\\[.1cm]
\hline
$7$ & $y_7=1-y_1-y_3-y_6$ & $x_7=\frac{1}{ky_7}$\\[.1cm]
\hline
$8$ & $x_8=1-x_2-x_4-x_7$ & $y_8=\frac{1}{kx_8}$\\[.1cm]
\hline
$9$ & $y_9=1-y_1-y_3-y_5-y_8$ & $x_9=\frac{1}{ky_9}$\\[.1cm]
\hline
\vdots & \vdots & \vdots \\
\hline
$2n$ & $ x_{2n}=1-\displaystyle\sum_{i=1}^{n-2} x_{2i}-x_{2n-1}$ & $y_{2n}=\frac{1}{kx_{2n}}=1-\displaystyle\sum_{i=1}^{n-1}y_{2n-1}$ \\[.1cm]
\hline
$2n+1$ & $y_{2n+1}=1-\displaystyle\sum_{i=1}^{n-1} y_{2i-1}-y_{2n}$ & $x_{2n+1}=\frac{1}{ky_{2n+1}}=1-\displaystyle\sum_{i=1}^{n-1}x_{2i}$ \\[.1cm]
\hline\hline
\end{tabular}
\end{center}
\caption{The successive equations involved in the general case}
\label{t:eqs-general-proc}
\end{table}

Now, to find the correct value of $x(=x_1)$, we distinguish two cases, according to  the parity of $k$:
\begin{enumerate}
\item[$(i)$] $k$ {\bf odd}, $k=2n+1$: Find the largest solution of the equation (concerning the rectangle $R_{2n}$)
\begin{equation}
\label{sol-k-odd}
x_{2n}y_{2n}=\left(1-\sum_{i=1}^{n-2} x_{2i}-x_{2n-1}\right)
\left(1-\sum_{i=1}^{n-1}y_{2n-1}\right)=\frac{1}{k}.
\end{equation}
\item[$(ii)$] $k$ {\bf even}, $k=2n+2$: Find the largest solution of the equation (concerning the rectangle $R_{2n+1}$)
\begin{equation}
\label{sol-k-even}
x_{2n+1}y_{2n+1}=\left(1-\sum_{i=1}^{n-1}x_{2i}\right)
\left(1-\sum_{i=1}^{n-1} y_{2i-1}-y_{2n}\right)=\frac{1}{k}.
\end{equation}
\end{enumerate}

\begin{table}[h!]
\begin{center}
\begin{tabular}{|c|c|c|c| }
\hline\hline
$k$ & $p(x)$ & $z_1$ & $z_1\simeq$  \\
\hline\hline
$1$ & $x-1$  & 1 & 1 \\
\hline
$2$ & $2x-1$ &  1/2 & 0.5  \\
\hline
$3$ & $3x-2$ & 2/3 & 0.66666\ldots  \\
\hline
$4$ & $3x-2$   & 2/3 & 0.66666\ldots \\
\hline
$5$ & $5x^2-5x+1$   & $\frac{5+\sqrt{5}}{10}$   &  0.7236067977\ldots \\[.1cm]
\hline
$6$ & $24x^2-29x+8$   & $\frac{29+\sqrt{73}}{48}$  & 0.7821667446\ldots \\[.1cm]
\hline
$7$ & $105x^3-187x^2+101x-15$   & $\frac{3+\sqrt{19}}{15}$ & 0.8239265962\ldots \\[.1cm]
\hline
$8$ & $512x^4-1224x^3+1023x^2-342x+36$   &  \eqref{x1(k8)}  & 0.8520842333\ldots \\
\hline
$9$ & $15876x^5-50211x^4+60423x^3-33911x^2+8582x-735$  & --  & 0.8720043100\ldots \\
\hline
$10$ & $384000x^7-1605800x^6+2738920x^5-2435892x^4$  &  -- & \\
   & $+1196193x^3-315644x^2+40192x-1920$ &  & 0.8869492506\ldots \\
\hline\hline
\end{tabular}
\end{center}
\caption{The obtained polynomials for $k=1,\ldots,10$ and their largest zero $z_1$.}
\label{t:polys}
\end{table}

Although the products in \eqref{sol-k-odd} and  \eqref{sol-k-even} are rational functions on $x$, their solutions correspond to the zeros  of certain polynomials, as shown in Table \ref{t:polys}, together with the obtained values of $x_1$, as their largest root. For instance, when $k=8$, the obtained polynomial (of degree four) is depicted in Figure \ref{f:pol(k=8)}, and the two largest roots $x_1$ and $x_2$ (with $x_1>x_2$) are in the interval $(0.8,0.9)$. More precisely,
\begin{align}
z_1 &=\frac{1}{256}\left(153 -3\sqrt{41} + \sqrt{4386 + 426\sqrt{41}}\right)\simeq 0.8520842333,
\label{x1(k8)}\\
z_2 &=\frac{1}{256}\left(153 +3\sqrt{41} + \sqrt{4386 - 426\sqrt{41}}\right)\simeq 0.8317625891,
\label{x2(k8)}
\end{align}
where $z_1$ gives the right value of $x_1$. Indeed, from this, we get the approximate lengths of Table \ref{t:aprox-sol(k=8a)} for the eight rectangles shown in Figure \ref{f:case8a}. In fact, this is not the only solution, as we will show in
the next section.

\begin{figure}[h]
	\centering
	\vskip-.5cm
	\includegraphics[scale=0.5]{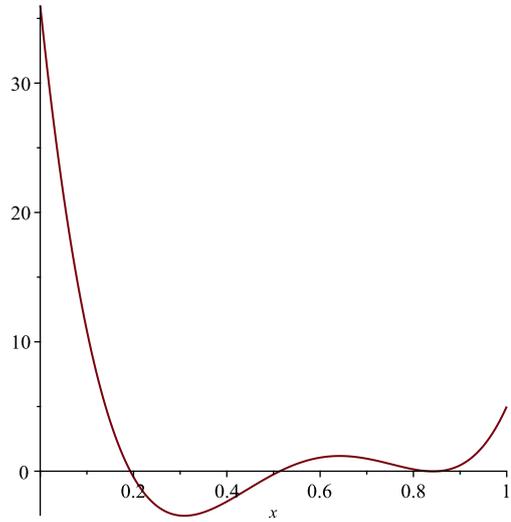}
	\vskip-6.1cm
	\caption{The polynomial for $k=8$.}
	\label{f:pol(k=8)}
\end{figure}


\begin{figure}[h!]
	\centering
	\vskip-2.5cm
	\includegraphics[scale=0.6]{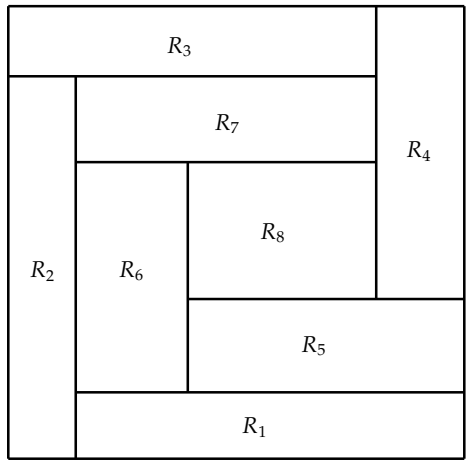}
	\vskip-7.75cm
	\caption{A perfect Mondrian partition of a square with $k= 8$ rectangles.}
	\label{f:case8a}
\end{figure}

\begin{table}[h!]
\centering
\begin{tabular}{|l|c|}
\hline\hline
$x_1 =0.8520842333$ & $y_1 =0.1466991115 $\\
\hline
$x_2 =0.1479157667$ & $y_2 =  0.8450755642$\\
\hline
$x_3 =0.8068449586 $ & $y_3 = 0.1549244358$\\
\hline
$x_4 =0.1931550414$ & $y_4 = 0.6471485242$\\
\hline
$x_5 =0.6063476421$ & $y_5 = 0.2061523643$\\
\hline
$x_6 = 0.2457365912 $ & $ y_6 = 0.5086747536 $\\
\hline
$x_7 =0.6589291919$ & $y_7 = 0.1897016991$\\
\hline
$x_8 =0.4131926007$ & $y_8 =  0.3025223893$\\
\hline\hline
\end{tabular}
\caption{Approximate lengths of the $k=8$ rectangles in a proper perfect Mondrian  partition of the  unit square.}
\label{t:aprox-sol(k=8a)}
\end{table}

This procedure leads us to formulate the following conjecture.

\begin{conjecture}
Given a square $S$, there is a proper perfect Mondrian partition $\P=\{R_1,\dots ,R_k\}$ of $S$ if and only if $k\geq 7$.
\end{conjecture}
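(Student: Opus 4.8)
The statement is an equivalence, and I would prove the two implications separately. The ``only if'' direction is essentially already contained in Section~\ref{s:first-results}. For $k\le 4$ there is no admissible partition of any rectangle by Lemma~\ref{p:adm}, while both properness (for $k>2$, as noted after its definition) and perfectness (Remark~\ref{r:nec}) force admissibility; hence no proper perfect Mondrian partition exists. For $k=5$, Proposition~\ref{p:admis5} excludes perfect Mondrian $5$-partitions, and the proposition immediately preceding Theorem~\ref{T:div7} excludes perfect Mondrian $6$-partitions of a square. Since a proper perfect Mondrian partition is in particular a perfect Mondrian partition, these results show $k\ge 7$ is necessary.

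The real content is the ``if'' direction, and the plan is to analyze the spiral partition of Figure~\ref{F: Case12} uniformly in $k$. By the remark opening Section~\ref{s:general-proper}, the spiral already yields a \emph{proper admissible} partition for every $k\ge 7$, so properness and admissibility are automatic; what remains is to choose $x=x_1$ so that the tiling is geometrically valid, equal-area, and non-congruent. Following Table~\ref{t:eqs-general-proc}, the lengths $x_i,y_i$ are explicit rational functions of $x$, and the closing condition \eqref{sol-k-odd}--\eqref{sol-k-even} is equivalent to $p_k(x)=0$ for the polynomial $p_k$ of Table~\ref{t:polys}. The first task is to show that $p_k$ has a largest real root $z_1(k)\in(0,1)$ at which all $2k$ lengths are strictly positive, equivalently that every partial sum of widths $\sum x_{2i}$ and of heights $\sum y_{2i-1}$ in Table~\ref{t:eqs-general-proc} stays strictly below $1$. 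I would establish the asymptotics $z_1(k)=1-O(1/k)$ suggested by Table~\ref{t:polys} and then prove the positivity inequalities inductively along the spiral, controlling these accumulating partial sums as $k\to\infty$; the smallest cases $k=7,8,9,10$ are checked directly from Tables~\ref{t:polys} and \ref{t:aprox-sol(k=8a)}.

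The decisive requirement is that the partition be Mondrian, $R_i\not\cong R_j$ for all $i\neq j$. Because every rectangle has area $1/k$, one has $R_i\cong R_j$ exactly when $x_i=x_j$ or $x_i=y_j$, so I must rule these coincidences out among the $2k$ numbers determined by $x_1=z_1(k)$. This is the genuine obstruction, and it is sharp: the spiral is rigid (the perfect condition is zero-dimensional, leaving no parameter to perturb an accidental congruence away), as already witnessed at $k=7$, where the spurious root $x_1=\frac{5}{7}$ forces $R_1\cong R_4$ and $R_2\cong R_3$ while only the largest root $s_3$ is Mondrian. Proving, uniformly in $k$, that the largest root never produces such a coincidence is where essentially all the difficulty lies. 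I expect the cleanest route is \emph{not} to fight the rigid spiral but to trade it for freedom: for large $k$, replace a fixed block of the spiral by a small flexible sub-configuration that, after the $1/k$-area constraints are imposed, still carries at least one free coordinate, so that the solution set becomes positive-dimensional; then non-congruence fails only on a proper subvariety, positivity holds on a nonempty open set, and any parameter in their intersection gives the desired partition. Making this rigorous---exhibiting, for each residue class of $k$, one admissible combinatorial type with enough free coordinates, bounding the codimension of the ``congruent'' and ``degenerate'' loci, and preserving properness throughout---together with a finite explicit verification for the smallest $k$, is the main obstacle to completing the proof.
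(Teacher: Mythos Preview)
The statement you are attempting is labeled a \emph{Conjecture} in the paper; there is no proof of it in the paper to compare against. The paper establishes the ``only if'' direction exactly as you describe (Section~\ref{s:first-results} rules out $k\le 6$), and then explicitly leaves the ``if'' direction open, proving only the weaker, non-proper version in Theorem~\ref{theorem5-1}. So your ``only if'' argument is correct and matches the paper, while for the ``if'' direction there is nothing in the paper to compare to.

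Your write-up for the ``if'' direction is a strategy outline rather than a proof, and you acknowledge this yourself. Two comments on the plan. First, the step ``establish the asymptotics $z_1(k)=1-O(1/k)$ and then prove the positivity inequalities inductively along the spiral'' hides a genuine difficulty: the recursion in Table~\ref{t:eqs-general-proc} is highly nonlinear in $x$, the polynomials $p_k$ have growing degree, and controlling both the location of the largest root and the sign of every intermediate length simultaneously is not a routine estimate. Second, your proposed escape route---trading the rigid spiral for a configuration with a free parameter so that congruences become a proper subvariety---is in tension with the very condition you are after: the natural way to inject a degree of freedom is to embed a sub-partition inside a rectangle of the tiling, but any such sub-rectangle immediately destroys \emph{properness}. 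Finding, for each $k$, a combinatorial type that is simultaneously proper, admissible, and carries a genuine continuous parameter after the equal-area constraints are imposed is not obviously possible, and you give no candidate. As written, the proposal correctly isolates the obstacles but does not overcome them; the conjecture remains open.
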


\section{Some enumeration results}
\label{s:enumeration}

In this section, we prove that, up to symmetries, there is a unique proper perfect Mondrian of a rectangle for $k=7$, and exactly two  proper perfect Mondrian partitions for $k=8$.

Following a known approach (see Brooks,  Smith, Stone,
and Tutte \cite{Brooks1987}), we first represent every partition $\P=\{R_1,\ldots,R_n\}$ by a digraph $G(\P)$
with vertex set $V$ and arc set $A$ in the following way: Each vertex $P_i$ represents a horizontal (continuous) line containing (one or more) upper sides of adjacent rectangles in $\P$, and there is an arc from $P_i$ to $P_j$ if a rectangle $R_k\in \P$ has the upper side contained in (the line represented by) $P_i$, and the lower side contained in $P_j$.

Then, if $\P$ is a proper perfect partition, it is easy to check that the corresponding digraph $G(\P)$ has some forbidden `configurations'  (that is, subdigraphs), as shown in Figure \ref{f:forbidden}.

\begin{figure}[ht]
	\centering
	\includegraphics[scale=0.8]{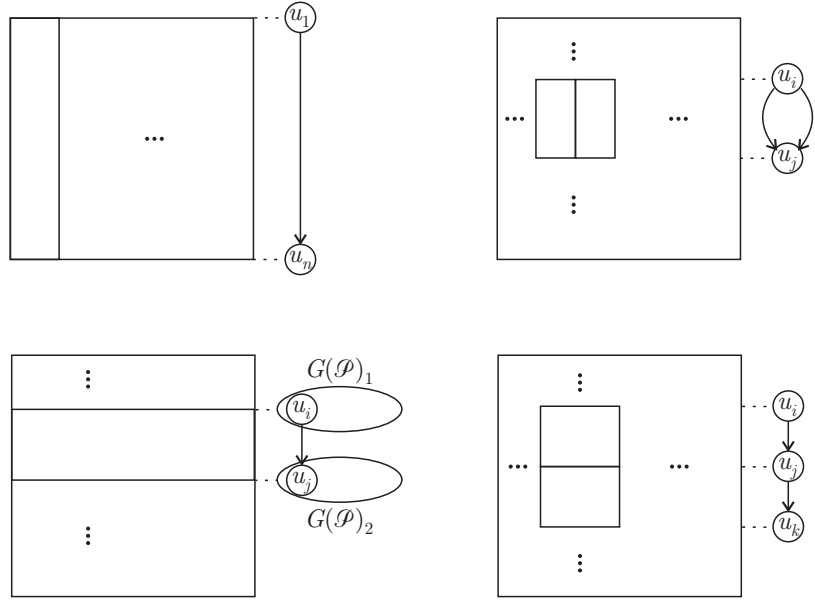}
	\vskip-14cm
	\caption{Forbidden configurations in a proper perfect partition. ($G(\P)_1$ and $G(\P)_2$ represent two disjoint subdigraphs of $G(\P)$.)}
	\label{f:forbidden}
\end{figure}

\begin{lemma}
\label{l:bound-n}
If $\P$ is a $k$-partition of a rectangle $R$, then there is a digraph $G(\P)=(V,A)$ with $k$ arcs, and number $n$ of vertices satisfying
\begin{equation}
\label{bounds-n}
\left\lceil \frac{k+7}{3}\right\rceil \le n\le \left\lfloor\frac{2(k+1)}{3} \right\rfloor.
\end{equation}
\end{lemma}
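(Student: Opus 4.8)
The plan is to read $G(\P)$ as a planar bipolar (acyclic, single source/single sink) digraph and to combine a degree count, which controls how small $n$ can be, with Euler's formula on a lightly augmented graph, which controls how large $n$ can be. The two estimates will produce exactly the lower and upper bounds in \eqref{bounds-n}.

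First I would record the structure of $G(\P)$. Each arc runs from the line carrying a rectangle's upper side down to the line carrying its lower side, so $G(\P)$ is acyclic, has precisely $k$ arcs, and admits the obvious plane drawing (place each vertex on its horizontal segment and route each arc inside the corresponding rectangle, whose interiors are disjoint); this drawing is connected since $R$ is. The top side of $R$ is the unique source $s$ with $\deg^-(s)=0$, the bottom side is the unique sink $t$ with $\deg^+(t)=0$, and every interior vertex has $\deg^-\ge 1$ and $\deg^+\ge 1$, because a maximal interior horizontal line always has rectangles immediately above and below it.

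For the upper bound $n\le\lfloor 2(k+1)/3\rfloor$ I would invoke the forbidden configurations of Figure~\ref{f:forbidden}. Ruling out an interior vertex of in- and out-degree $(1,1)$ is exactly admissibility (Remark~\ref{r:nec}): such a vertex would force the unique rectangle above and the unique rectangle below to share a full common side. Hence every interior vertex has total degree $\ge 3$. Since $\P$ is proper, no rectangle has its sides on two opposite sides of $R$, which forces $\deg^+(s)\ge 2$ and $\deg^-(t)\ge 2$. Summing degrees then yields $2k=\sum_{v}\deg(v)\ge 2+2+3(n-2)=3n-2$, so $3n\le 2(k+1)$ and, taking the floor, $n\le\lfloor 2(k+1)/3\rfloor$.

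For the lower bound $n\ge\lceil(k+7)/3\rceil$ I would augment $G(\P)$ by one \emph{pole edge} joining $s$ and $t$ through the outer face, obtaining a connected plane graph $G'$ with $n$ vertices and $k+1$ edges. The crucial claim is that $G'$ is simple: it has no loop since a rectangle's top and bottom lines differ; it has no repeated $s$--$t$ edge because a genuine $s$--$t$ arc would be a rectangle spanning $R$ from top to bottom, which properness forbids; and a pair of parallel arcs would realize the ``two disjoint subdigraphs'' forbidden configuration, i.e. a proper sub-rectangle, again excluded. Applying the planar bound $|E(G')|\le 3n-6$ (valid since $n\ge 3$ throughout the relevant range) gives $k+1\le 3n-6$, hence $k\le 3n-7$, i.e. $n\ge(k+7)/3$, and taking the ceiling completes the argument. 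It is precisely the extra pole edge that upgrades the naive estimate $k\le 3n-6$ to the sharp $k\le 3n-7$.

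The step I expect to be the main obstacle is the faithful translation between the geometry of $\P$ and the combinatorics of $G'$: namely, showing that parallel arcs and direct $s$--$t$ arcs correspond exactly to the forbidden configurations, so that $G'$ is genuinely simple and the clean $3n-6$ inequality applies with the pole edge included. Proving planarity and connectivity of $G(\P)$ from the dissection, and verifying the degree lower bounds at $s$, $t$ and the interior vertices, are intuitively clear via the BSST correspondence \cite{Brooks1987} but should be stated with care.
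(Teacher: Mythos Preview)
Your upper bound is exactly the paper's: the same degree count from the forbidden configurations, giving $2k\ge 3n-2$.

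For the lower bound the paper takes a different route. Rather than add a pole edge and invoke $|E|\le 3n-6$, it passes to the ``dual'' digraph $G'(\P)$ built from the maximal \emph{vertical} segments (equivalently, rotate $R$ through $90^\circ$ and repeat the construction). If $G(\P)$ has $c$ faces, then $G'(\P)$ has $n'=c+1$ vertices and the same number $k$ of arcs; applying the already-proved upper bound to $G'(\P)$ together with Euler's formula $c+n=k+2$ yields $n=k+3-n'\ge k+3-\tfrac{2(k+1)}{3}=(k+7)/3$. The two arguments are equivalent under the hood: the vertex-degree hypotheses for $G'(\P)$ (source and sink of degree $\ge 2$, interior vertices of degree $\ge 3$) translate precisely into the face-length hypotheses you need (outer face of length $\ge 4$, inner faces of length $\ge 3$), and your pole edge is just the device that splits the outer face into two faces of length $\ge 3$ so that the uniform $3n-6$ bound applies. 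The paper's version is pleasantly symmetric (one inequality, used twice via duality); yours is a more direct appeal to the textbook planar bound. Your simplicity claim for $G'$ is correct for proper admissible partitions, though the reason deserves one more sentence: two parallel arcs $u\to v$ correspond to two rectangles of the same height whose tops lie on the same maximal segment $u$ and bottoms on the same $v$; if they are adjacent they share a full vertical side (not admissible), and if not, the rectangles strictly between them tile a proper sub-rectangle (not proper). Together with the absence of any $s$--$t$ arc, this indeed makes $G'$ simple.
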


\begin{proof}
The first statement is a consequence of the definition of  $G(\P)$.
To prove \eqref{bounds-n}, we notice that,
to avoid the forbidden subdigraphs, the vertex $u_1$ has at least 2 outgoing arcs; the vertex $u_n$ has least 2 incoming arcs; and each vertex $u_i$ (with $i\neq 1,n$) has at least 3 (incoming or outcoming) arcs. Then,
\begin{equation}
\label{bounds-n2}
k=|A|\ge \frac{4+3(n-2)}{2}=\frac{3}{2}n-1,
\end{equation}
whence the upper bound in \eqref{bounds-n} follows because $n$ is an integer.

To find the lower bound, we consider the digraph $G'(\P)$ that corresponds to the same partition $\P$, but with vertices $v_1,\ldots, v_{n'}$ corresponding to the vertical sides (from left to right) of the partitioned rectangle $R$ (of course, this would be equivalent to rotate $90$ degrees clockwise $R$ to get $\P'$ and consider $G(\P')$). In fact, $G'(\P)$ can be obtained easily from $G(\P)$ in the following way: Consider the new vertices $v_1$ on the left of $G(\P)$, $v_{n'}$ on the right of $G(\P)$, and $v_2,\ldots, v_{n'-1}$ inside of every face of $G(\P)$. Moreover, put an arc from $v_i$ to $v_j$ (from left to right) if the corresponding directed edge crosses exactly one arc of $G(\P)$. Thus, $G(\P')$ is a kind of ``dual" of $G(\P)$ but, in the external face of it, we put two vertices ($v_1$ and $v_{n'}$) instead of one; see Figure \ref{f:cases7} (on the left), for an example. If $G(\P)$ has $n$ vertices, $c$ faces, and $k$ (directed) edges, then $G(\P')$ has $c'=n-1$ faces, $n'=c+1$ vertices, and $k'=k$ (directed) edges. Moreover, since $G(\P)$ and  $G(\P')$ are planar digraphs, Euler's formula and the upper bound in \eqref{bounds-n} yields
\begin{align*}
c+n=k+2 & \quad \Longrightarrow \quad n'-1+n=k+2 \\
        & \quad \Longrightarrow \quad
n'= k+3-n\ge k+3-\frac{2(k+1)}{3}=\frac{k+7}{3},
\end{align*}
and the lower bound follows.
\end{proof}

\subsection{The case $k=7$}
Now we can prove that the given proper perfect Mondrian $7$-partition is unique.
To this end, first notice that, by Lemma \ref{l:bound-n}, the number $n$ of vertices in the digraph $G(\P)$ is $5$.
Thus, we can assume that the digraph $G(\P)$ has vertices $u_1,\ldots,u_5$ and $k=7$ arcs (from top to bottom). Then, there are only two possible cases (up to symmetries), $G(\P_{7a})$ and $G(\P_{7b})$, are shown in Figure \ref{f:cases7}. (In fact, allowing symmetries, there are a total of eight cases. In the digraphs, such symmetries correspond  to take either the converse digraph, reversing all the arrows, or the ``dual digraph'', as explained before in the proof of Lemma \ref{l:bound-n}).  The first case on the left, with digraph $G(\P_{7a})$, corresponds to the spiral pattern studied in Section \ref{s:first-results}. In the second case, although the digraph $G(\P_{7b})$ is not isomorphic to $G(\P_{7a})$, it turns out that the equations to be solved give a forbidden (non-Mondrian) partition with three pairs of equal rectangles, $R_1\cong R_4$, $R_2\cong R_3$, and $R_6\cong R_7$, as
shown again in Figure \ref{f:cases7} (pattern with axial symmetry, as that of its digraph).
This allows to state the following result:
\begin{proposition}\label{prop:k=7}
There is only one (up to symmetries) proper perfect Mondrian $7$-partition of a square.
\end{proposition}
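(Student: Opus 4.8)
The plan is to encode each candidate partition by its horizontal-line digraph $G(\P)$ and to exploit how rigidly the hypotheses \emph{proper} and \emph{perfect} constrain that digraph. Applying Lemma \ref{l:bound-n} with $k=7$, both bounds in \eqref{bounds-n} collapse to $n=5$, so every proper perfect Mondrian $7$-partition yields a digraph with exactly $5$ vertices and $7$ arcs. Existence is already guaranteed by Theorem \ref{T:div7}, so the entire content of the proposition is uniqueness, and I would reduce it to a finite digraph enumeration.

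First I would pin down the degree sequence. The digraph $G(\P)$ is acyclic with a single source $u_1$ and single sink $u_5$, and avoiding the forbidden configurations of Figure \ref{f:forbidden} forces (as recorded in the proof of Lemma \ref{l:bound-n}) $\deg^+(u_1)\ge 2$, $\deg^-(u_5)\ge 2$, and total degree at least $3$ at each internal vertex $u_2,u_3,u_4$. Since the handshake count is $\sum \deg = 2\cdot 7 = 14$, only one unit of slack remains above the minimum $2+2+3\cdot 3 = 13$. This rigidity is exactly what makes the enumeration short: essentially one internal vertex carries a fourth arc, and everything else is forced.

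Next I would enumerate, up to the symmetries generated by arc-reversal (passing to the converse digraph) and by the planar duality $G(\P)\mapsto G'(\P)$ used in the proof of Lemma \ref{l:bound-n}, all such digraphs on five levels. I expect exactly two non-isomorphic survivors, $G(\P_{7a})$ and $G(\P_{7b})$ of Figure \ref{f:cases7}, with the eight raw diagrams collapsing in two orbits of four. This symmetry reduction is where I expect the main obstacle: one must argue that planarity together with the forbidden subdigraphs admits no further arrangement of the seven arcs among the five levels, and that the orbit count is exactly as claimed, rather than hand-waving past low-probability cases.

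Finally I would dispatch the two survivors. The digraph $G(\P_{7a})$ is precisely the spiral pattern of Section \ref{s:first-results}, whose defining system was solved in the proof of Theorem \ref{T:div7}; there the only root giving positive side lengths and pairwise non-congruent rectangles is $x_1=\frac{8+\sqrt{19}}{15}$, so it contributes a single partition up to symmetry. For $G(\P_{7b})$ I would write the analogous area equations dictated by its (axially symmetric) adjacency structure and show that the symmetry of the diagram forces $R_1\cong R_4$, $R_2\cong R_3$, and $R_6\cong R_7$; hence this case never produces a Mondrian partition. Combining the two cases, $G(\P_{7a})$ supplies the unique proper perfect Mondrian $7$-partition of a square, which is the assertion of the proposition.
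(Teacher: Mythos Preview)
Your proposal is correct and follows essentially the same route as the paper: use Lemma~\ref{l:bound-n} to force $n=5$, enumerate the admissible digraphs up to converse and planar-dual symmetry to obtain exactly $G(\P_{7a})$ and $G(\P_{7b})$, identify the first with the spiral of Theorem~\ref{T:div7} (whose only valid root is $x_1=\frac{8+\sqrt{19}}{15}$), and discard the second because its axial symmetry forces $R_1\cong R_4$, $R_2\cong R_3$, $R_6\cong R_7$. Your handshake count $2\cdot 7=13+1$ makes explicit the degree rigidity that the paper leaves to the reader, but the overall argument is the same.
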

\begin{figure}[h!]
	\centering
	\includegraphics[scale=0.52]{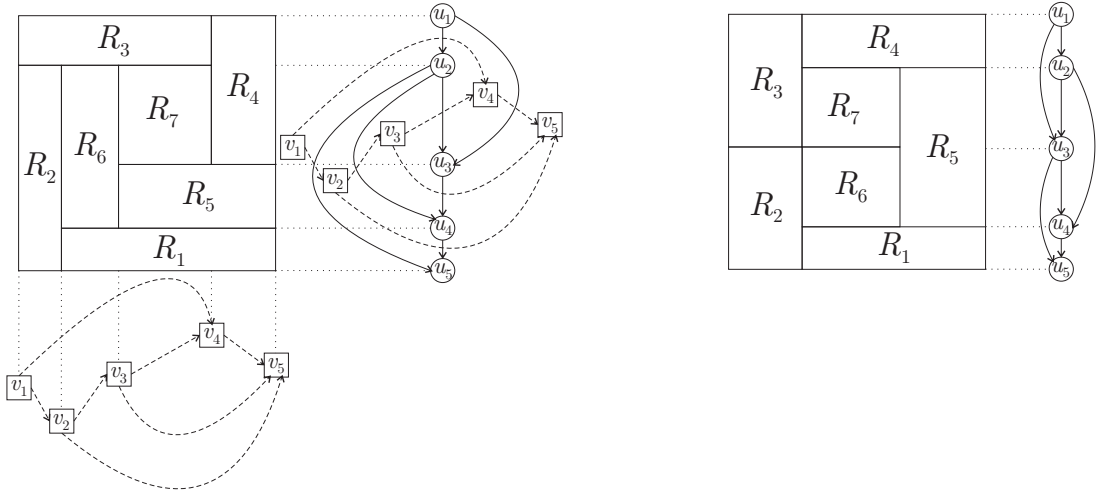}
	\vskip-4.25cm
	\caption{The two possible digraphs for the case $k=7$: $G(\P_{7a})$ (together with  its `dual digraph') on the  left, and $G(\P_{7b})$  on the right.}
	\label{f:cases7}
\end{figure}

\subsection{The case $k=8$}
A similar study, although a little more time-consuming, can be done for the case of proper  perfect Mondrian $8$-partitions.
In this case, by Lemma \ref{l:bound-n}, we see that it is enough to consider the digraphs $G(\P)$ with $n\in \{5,6\}$ vertices $u_1,u_2,\ldots$ and $k=8$ arcs. Then, the possible digraphs lead to essentially five different patterns: The first three partitions are spiral-like, as shown in Figure \ref{f:cases-spiral-8}, whereas the last two partitions have some strong symmetries, as shown in Figure \ref{f:cases-symmetric-8}. Notice that, in all the cases, the digraphs have 6 vertices; the (equivalent) ones with 5 vertices appear as dual of the former and, then, they do not need to be considered.

Notice that the first spiral pattern of Figure \ref{f:cases-spiral-8} corresponds to the solution found in Section \ref{s:general-proper} (see Figure \ref{f:case8a} and Table \ref{t:aprox-sol(k=8a)}).

\begin{figure}[h!]
	\centering
	\includegraphics[scale=0.45]{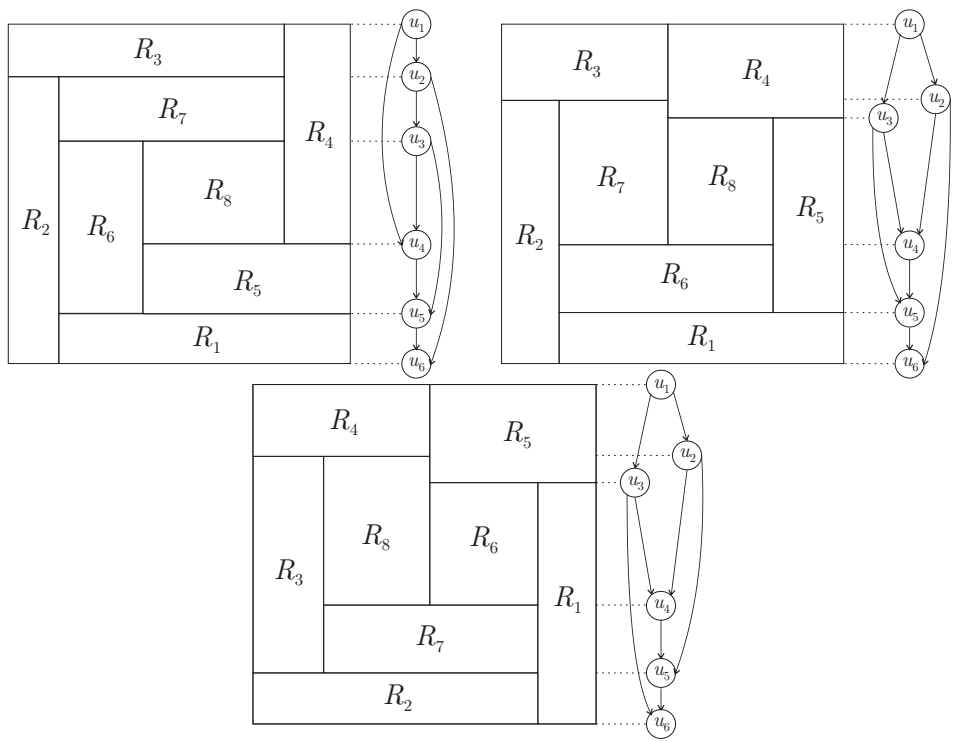}
	\vskip.25cm
	\caption{Possible spiral patterns for $k=8$, with  their corresponding digraphs:  $G(\P_{8a})$,  $G(\P_{8b})$, and $G(\P_{8b'})$.}
\label{f:cases-spiral-8}
\end{figure}

Moreover, the two spiral patterns  of Figure \ref{f:cases-spiral-8}, in the top-right and bottom, $G(\P_{8b})$, and $G(\P_{8b'})$, are equivalent in the sense that they give rise to the same equations (when we try to find a proper perfect Mondrian partition). In fact, it is easy to check that the equations for $x_i$ and $y_i$, for $i=1,\ldots,7$, are exactly the same as the ones for the `standard' spiral pattern of Figure \ref{f:case8a}. For instance, in both cases, we have $x_7=1-x_2-x_4$ and $y_7=1-y_1-y_3-y_6$ (following the general  procedure of Section \ref{s:general-proper}, which is summarized in Table \ref{t:eqs-general-proc}). Consequently, the polynomial (whose zeros are the candidates for giving a proper perfect Mondrian $8$-decomposition) is the same as the one shown in Table \ref{t:polys} for $k=8$ (see also Figure \ref{f:pol(k=8)}, with largest roots $x_1$ and $x_2$ in \eqref{x1(k8)} and \eqref{x2(k8)}).
However, curiously enough, the final pattern is not the same because now the sides of $R_8$ are:
\begin{align*}
x_8 & =1-x_3-x_5(=1-x_2-x_5-x_7),\\
y_8 &=1-y_1-y_4-y_6,
\end{align*}
and the zero to be taken now as the value of $x_1$ is not $z_1$, but $z_2\simeq 0.8317625891$. Thus, with $x_1=z_2$, we obtain
the solution shown in Figure \ref{f:case8b}, with approximate values of the rectangle sides in Table \ref{t:aprox-sol(k=8b)}.

\begin{figure}[h!]
	\centering
	\vskip-1.5cm
	\includegraphics[scale=0.5]{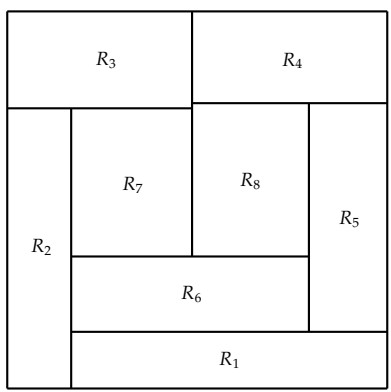}
	\vskip-6.5cm
	\caption{A second proper perfect Mondrian  $8$-partition of the unit square.}
	\label{f:case8b}
\end{figure}

\newpage

\begin{table}[h!]
\centering
\begin{tabular}{|l|c|}
\hline\hline
$x_1 =0.8317625891$ & $y_1 =0.1502832679$\\
\hline
$x_2 =0.1682374109$ & $y_2 =0.7429976444$\\
\hline
$x_3 =0.4863768650$ & $y_3 = 0.2570023556$\\
\hline
$x_4 =0.5136231350$ & $y_4 =0.2433690998$\\
\hline
$x_5 =0.2061523676$ & $y_5 =0.6063476323$\\
\hline
$x_6 =0.6256102215$ & $ y_6 =0.1998049196$\\
\hline
$x_7 =0.3181394541$ & $y_7 =0.3929094569$\\
\hline
$x_8 =0.3074707674$ & $y_8 =0.4065427127$\\
\hline\hline
\end{tabular}
\caption{Approximate lengths of the $k=8$ rectangles in the second proper perfect Mondrian partition of the  unit square.}
\label{t:aprox-sol(k=8b)}
\end{table}

\begin{figure}[h!]
	\centering
	\includegraphics[scale=0.5]{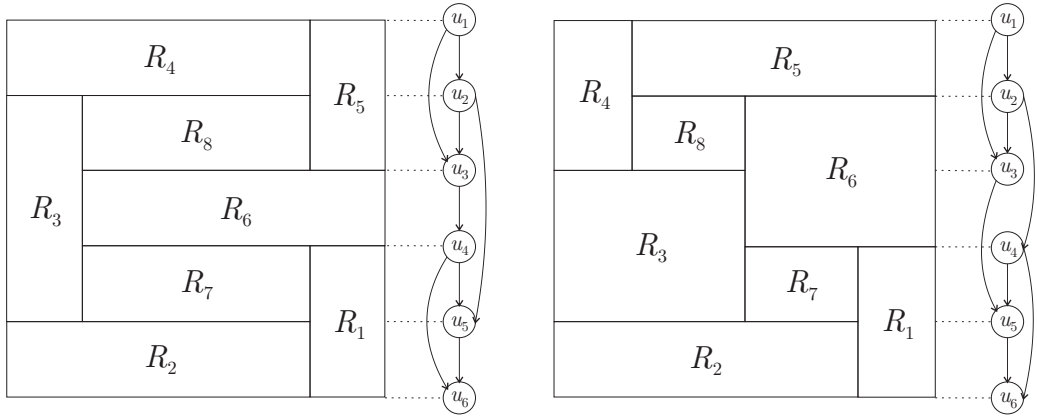}
	\vskip-5cm
	\caption{Possible  patterns for $k=8$ with  (specular and central)  symmetry.}
	\label{f:cases-symmetric-8}
\end{figure}

Finally,  the  two last digraphs leading to the partitions, with specular and central symmetries, of  Figure \ref{f:cases-symmetric-8}
do not lead to proper perfect Mondrian partitions because the obtained solutions do not break such symmetries and, hence, the partitions are not Mondrian (that is, as in the second case of $k=7$, there are pairs of equal rectangles).

Summarizing, we get the following result.
\begin{proposition}\label{prop:k=8}
There are only two (up to symmetries) proper perfect Mondrian $8$-partitions of a square.
\end{proposition}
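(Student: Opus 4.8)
The plan is to reduce the statement to a finite enumeration governed by the associated digraph $G(\P)$, and then to solve, pattern by pattern, the resulting system of area equations. First I would invoke Lemma \ref{l:bound-n} with $k=8$: the bounds give $\lceil 15/3\rceil=5\le n\le\lfloor 18/3\rfloor=6$, so $G(\P)$ has either $5$ or $6$ vertices and exactly $8$ arcs. Since rotating the partition by $90$ degrees interchanges a digraph on $n$ vertices with its ``dual'' on $k+3-n$ vertices (exactly as in the proof of Lemma \ref{l:bound-n}), the admissible digraphs on $5$ vertices are precisely the duals of those on $6$ vertices; hence I need only enumerate the planar digraphs on $6$ vertices with $8$ arcs that avoid the forbidden subdigraphs of Figure \ref{f:forbidden}. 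A careful combinatorial enumeration, carried out up to the symmetries of taking the converse digraph (reversing all arcs) and the dual, should leave exactly five admissible patterns.

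The next step is to sort these five patterns into the three spiral-like ones, $G(\P_{8a})$, $G(\P_{8b})$, $G(\P_{8b'})$ of Figure \ref{f:cases-spiral-8}, and the two strongly symmetric ones of Figure \ref{f:cases-symmetric-8}. For each pattern I would write down the recursive relations of Table \ref{t:eqs-general-proc}, parametrize every side by $x=x_1$, and impose area $\frac{1}{8}$ on each tile; the final closing equation then collapses to a single polynomial in $x$. For the spiral cases the key observation is that $G(\P_{8b})$ and $G(\P_{8b'})$ produce exactly the same equations for $x_i,y_i$ with $i\le 7$ as the standard spiral $G(\P_{8a})$ (for instance $x_7=1-x_2-x_4$ and $y_7=1-y_1-y_3-y_6$ in all three), so all three share the degree-four polynomial listed for $k=8$ in Table \ref{t:polys}. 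That polynomial has exactly two admissible roots, $z_1$ and $z_2$ of \eqref{x1(k8)} and \eqref{x2(k8)}, both in $(0.8,0.9)$. I would then verify that $z_1$ closes the standard spiral into the Mondrian partition of Figure \ref{f:case8a}, whereas $z_2$ closes the modified spiral (where $x_8=1-x_3-x_5$ and $y_8=1-y_1-y_4-y_6$) into the genuinely different partition of Figure \ref{f:case8b}; checking in each case that all eight pairs of side lengths are positive, satisfy the proper and admissible conditions, and are pairwise non-congruent is a finite numerical verification.

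Finally, for the two symmetric patterns of Figure \ref{f:cases-symmetric-8} I would argue that any real solution of their area equations inherits the specular (respectively central) symmetry of the pattern itself: the symmetry of the digraph forces the polynomial system to be invariant under the corresponding involution of the variables, and the admissible branch of solutions is fixed by that involution. Consequently the partition contains pairs of rectangles exchanged by the symmetry, which are therefore congruent, so it fails to be Mondrian, exactly as in the second case of $k=7$. Combining the three steps shows that the only proper perfect Mondrian $8$-partitions are the two obtained from $z_1$ and $z_2$, which proves the proposition.

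The step I expect to be the main obstacle is the symmetry argument of the last paragraph: establishing rigorously that the symmetric patterns admit only symmetry-preserving solutions (rather than merely reading this off the figures) requires controlling all branches of a nonlinear system and ruling out a hypothetical symmetry-breaking root that would still yield equal areas. A secondary difficulty is making the digraph enumeration genuinely exhaustive, that is, certifying that the five patterns really are all of them up to converse and dual, with no admissible configuration overlooked.
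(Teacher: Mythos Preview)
Your proposal is correct and follows essentially the same route as the paper: the reduction to digraphs on $n\in\{5,6\}$ vertices via Lemma~\ref{l:bound-n}, the duality argument collapsing $n=5$ onto $n=6$, the enumeration into five patterns (three spiral, two symmetric), the observation that all three spiral patterns share the same equations for $i\le 7$ and hence the same quartic, with $z_1$ and $z_2$ closing $G(\P_{8a})$ and $G(\P_{8b})\sim G(\P_{8b'})$ respectively, and the symmetry argument ruling out the last two patterns. The two concerns you flag---rigorously excluding symmetry-breaking roots and certifying exhaustiveness of the digraph enumeration---are exactly the points the paper also leaves at the level of ``it is easy to check'', so your proof is at the same level of detail as (and in places more scrupulous than) the paper's own argument.
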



\section{Perfect Mondrian partitions of a rectangle/square}
\label{s:perfect-rectangle}

Although we have no proof that the  procedure of Section \ref{s:general-proper} always yields a proper perfect Mondrian partition for every $k\ge 7$, we will prove now that, for $k\ge 8$, this is the case when we remove the `proper' condition.
First, we need the following lemma.

\begin{lemma}
\label{l:cond-R->R'}
Let $\P=\{R_1,\ldots R_k\}$ be a perfect Mondrian $k$-partition of a rectangle $R$ with sides $a$ and $b$. Let $x_i$ and $y_i$ be the dimensions of $R_i$ for $i=1,\ldots,k$.
Let $R'$ be a rectangle with dimensions $a'$ and $b'$. Then, there exists a perfect Mondrian $k$-partition $\P'=\{R'_1,\ldots R'_k\}$  of $R'$, with $x'_i$ and $y'_i$ being the dimensions of $R'_i$, for  $i=1,\ldots,k$, if
\begin{equation}
\label{cond-R->R'}
x_ix_j\neq \frac{a^2}{k}\frac{b'}{a'}\quad \mbox{for every $i\neq j$}.
\end{equation}
\end{lemma}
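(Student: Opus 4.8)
The plan is to obtain $\P'$ by transporting $\P$ through the axis-aligned affine scaling
\[
\phi(u,v)=\left(\tfrac{a'}{a}\,u,\ \tfrac{b'}{b}\,v\right),
\]
which carries $R=[0,a]\times[0,b]$ bijectively onto $R'=[0,a']\times[0,b']$. Since $\phi$ sends axis-parallel segments to axis-parallel segments and preserves adjacencies and incidences, the images $R'_i:=\phi(R_i)$ form a genuine $k$-partition $\P'$ of $R'$, and each $R'_i$ is a rectangle with dimensions $x'_i=\frac{a'}{a}x_i$ and $y'_i=\frac{b'}{b}y_i$. First I would record that $\P'$ is perfect:
\[
x'_i y'_i=\frac{a'b'}{ab}\,x_i y_i=\frac{a'b'}{ab}\cdot\frac{ab}{k}=\frac{a'b'}{k}=\frac1k\area(R'),
\]
so every tile carries the required area. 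This step is immediate; the substance of the argument is showing that $\P'$ remains Mondrian.

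For the Mondrian condition, fix $i\neq j$ and suppose toward a contradiction that $R'_i\cong R'_j$. Because all tiles of $\P'$ have equal area, the congruence $R'_i\cong R'_j$ is equivalent to $x'_i\in\{x'_j,y'_j\}$, that is, to either $x'_i=x'_j$ or $x'_i=y'_j$. I would dispose of the first alternative using only that the source partition $\P$ is Mondrian: $x'_i=x'_j$ gives $x_i=x_j$, whence (again by equal areas in $\P$) $y_i=y_j$, and thus $R_i\cong R_j$, contradicting $i\neq j$. Hence the hypothesis \eqref{cond-R->R'} is needed only to exclude the second, ``crossed'' alternative.

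It remains to translate $x'_i=y'_j$ into the stated numerical condition. Writing $y_j=\frac{ab}{k x_j}$ from $\area(R_j)=\frac{ab}{k}$, the equality $\frac{a'}{a}x_i=\frac{b'}{b}y_j$ rearranges to $x_i x_j=\frac{a^2}{k}\frac{b'}{a'}$, which is exactly what \eqref{cond-R->R'} forbids for $i\neq j$. Therefore neither alternative can occur, so $R'_i\not\cong R'_j$ for all $i\neq j$ and $\P'$ is a perfect Mondrian $k$-partition of $R'$. I expect the only delicate point to be this last reduction: one must observe that the ``parallel'' collision $x'_i=x'_j$ is automatically ruled out by the Mondrian-ness of the source partition, so the hypothesis has to control only the ``crossed'' collisions $x'_i=y'_j$; the remaining possibility $x'_i=y'_i$ (a tile becoming a square) concerns $i=j$ and is harmless.
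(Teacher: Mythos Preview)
Your proof is correct and follows essentially the same approach as the paper: rescale $R$ to $R'$ via the affine map $(u,v)\mapsto\bigl(\tfrac{a'}{a}u,\tfrac{b'}{b}v\bigr)$, observe that perfection is preserved, and then check that no two image rectangles are congruent by ruling out $x'_i=x'_j$ (from the source being Mondrian) and $x'_i=y'_j$ (from the hypothesis \eqref{cond-R->R'}). Your write-up is in fact slightly more explicit than the paper's in justifying why these two alternatives exhaust the possible congruences.
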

\begin{proof}
To prove the result, we simply re-scale the dissected rectangle $R$ to get the partition of $R'$. This requires to multiply all the horizontal sides of $R$ by $\frac{a'}{a}$, and all its vertical sides by $\frac{b'}{b}$. Now, it is clear that $\P'$ is also perfect, and we only need to prove that it is Mondrian, that is, $R'_i\ncong R'_j$ for $i\neq j$. Since $x_i\neq x_j$ and $y_i\neq y_j$, we obviously have $x'_i\neq x'_j$ and $y'_i\neq y'_j$, for every $i\neq j$. Moreover, let us check that, for every $i\neq j$, we also have
$x'_i\neq y'_j$. That is,
$$
\frac{a'}{a}x_i\neq\frac{b'}{b}y_j=\frac{b'}{b}\frac{ab}{kx_j},
%
$$
in concordance with the required condition \eqref{cond-R->R'}.
\end{proof}

In particular, if $R'=S$ is a square, $a'=b'$,
then the condition \eqref{cond-R->R'} becomes
\begin{equation}
\label{cond-R'->S}
x_ix_j\neq \frac{a^2}{k}\quad \mbox{for every $i\neq j$}.
\end{equation}

\begin{theorem}
\label{theorem5-1}
Let $\P_7=\{R_1,\ldots,R_7\}$ be the perfect $7$-partition of a unit square, in Theorem \ref{T:div7}, and with dimensions $x_i$ of $R_i$, for $i=1,\ldots,7$, in Table \ref{t:sol-exact(k=7)}. Then, the following statements are satisfied.
\begin{itemize}
\item[$(i)$]
Every rectangle $R'$ with dimensions $a'$ and $b'$ satisfying $\frac{b'}{a'}\neq 7x_ix_j$, for every $i\neq j$, admits a perfect Mondrian partition with $k=7$.
\item[$(ii)$]
Every square $S$ admits a perfect Mondrian partition with $k\ge 8$.
\end{itemize}
\end{theorem}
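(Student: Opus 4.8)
\emph{Part (i)} is an immediate application of Lemma \ref{l:cond-R->R'} to the partition $\P_7$ of Theorem \ref{T:div7}. Here the dissected rectangle is the unit square, so $a=b=1$ and $k=7$, and condition \eqref{cond-R->R'} reads $x_ix_j\neq \frac{1}{7}\frac{b'}{a'}$ for all $i\neq j$, which is exactly the hypothesis $\frac{b'}{a'}\neq 7x_ix_j$. Thus, whenever $R'$ satisfies this hypothesis, Lemma \ref{l:cond-R->R'} produces a perfect Mondrian $7$-partition of $R'$. Note that the excluded set $\{7x_ix_j : i\neq j\}$ is finite, so all but finitely many aspect ratios are admissible.

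For \emph{part (ii)} the plan is to prove by induction on $k$ that the unit square admits a perfect Mondrian $k$-partition for every $k\ge 7$. Since any square is a uniform rescaling of the unit square, and uniform rescaling preserves both equal areas and non-congruence, this at once gives the statement for an arbitrary square $S$. To make the induction work I would establish the slightly stronger assertion $\mathcal{H}(k)$: \emph{the unit square admits a perfect Mondrian $k$-partition, and moreover so does every rectangle whose aspect ratio avoids a certain finite set.} The base cases $\mathcal{H}(7)$ and $\mathcal{H}(8)$ are already available: the unit-square partitions come from Theorem \ref{T:div7} and from the construction of Section \ref{s:general-proper} (cf.\ Proposition \ref{prop:k=8}), and the rectangle versions follow by transferring these through Lemma \ref{l:cond-R->R'}, whose obstruction \eqref{cond-R->R'} rules out only finitely many ratios.

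For the inductive step I would assume $\mathcal{H}(k-1)$ with $k\ge 9$. By hypothesis almost every rectangle carries a perfect Mondrian $(k-1)$-partition into pieces of a common area $\alpha$; I would fix $\alpha$ and let the aspect ratio $r$ of such a rectangle $L$ vary over an interval (minus finitely many forbidden values). Adjoining to $L$, along one full side, a single rectangle $P$ of the same area $\alpha$ yields a rectangle $Q=L\cup P$ now carrying $k$ pieces of equal area $\alpha$. A short computation, using that no piece of a proper partition spans two opposite sides, shows that $P$ is congruent to one of the $k-1$ pieces of $L$ only for finitely many values of $r$; discarding these, $Q$ becomes a genuine perfect Mondrian $k$-partition of a rectangle. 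Finally I would transfer the partition of $Q$ to the unit square via the square case \eqref{cond-R'->S} of Lemma \ref{l:cond-R->R'}, which again fails only on a finite set of values of $r$. Choosing $r$ outside the union of all these finite exclusion sets yields $\mathcal{H}(k)$ and closes the induction.

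The step I expect to be the main obstacle is the bookkeeping that guarantees a \emph{valid} choice of the free parameter $r$ actually survives. Each congruence or transfer constraint is an algebraic equation in $r$, and the hard part will be checking that none of them holds \emph{identically} in $r$; otherwise a whole interval of parameters, rather than finitely many points, would be excluded and the argument would collapse. The representative case---when the adjoined strip $P$ becomes congruent to a piece of $L$---does reduce to the finitely many solutions of the form $r=1/x_i$, precisely because the equal-area relation $x_iy_i=\tfrac{1}{k-1}$ forces consistency only at those points; I expect the transfer condition \eqref{cond-R'->S} to behave the same way, since the piece widths of $Q$ vary non-trivially with $r$. Verifying these non-degeneracies---that every exclusion is finite and that the remaining admissible set of $r$ is non-empty---is the technical heart of part (ii); everything else is the elementary geometry of gluing one strip onto a smaller perfect partition and rescaling.
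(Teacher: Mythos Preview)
Part $(i)$ is correct and identical to the paper's argument: it is an immediate instance of Lemma \ref{l:cond-R->R'} with $a=b=1$ and $k=7$.

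Part $(ii)$ has a genuine gap in the inductive step. You justify that the adjoined strip $P$ is congruent to a piece of $L$ for only finitely many values of $r$ by invoking that ``no piece of a \emph{proper} partition spans two opposite sides.'' But your own construction destroys properness: once a strip has been adjoined in the passage from $k-1$ pieces to $k$, the resulting partition of the unit square contains a piece spanning a full side (the rescaled image of that strip). At the next step, when you rescale this square to $L$ and adjoin a new strip $P$ along the \emph{same} pair of sides, both $P$ and the old strip have the full width of $L$ and the same area, hence they are congruent for \emph{every} value of $r$, not just finitely many. The exclusion set is then the entire parameter interval and the induction collapses at $k\ge 9$. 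Your outline gives no mechanism for choosing the side on which to glue, and the stated reason why the bad case cannot occur is precisely the hypothesis your construction violates.

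The paper avoids this in two ways. First, it adjoins strips \emph{alternately} on the bottom and on the left (and rescales back to a unit square after each addition), so that after every step the previously added strip no longer spans a full side of the current square; this is the missing geometric idea. Second, and more decisively, the paper never runs a genericity argument at all: it writes down explicit dimensions for every piece (Tables \ref{dim-k-even} and \ref{dim-k-odd}) and uses that the seven original pieces have irrational side lengths (except $R_5$) while every subsequently added strip has rational side lengths. The rational/irrational dichotomy makes non-congruence between old and new pieces automatic (with $R_5$ and the last couple of strips checked directly), and non-congruence among the added strips follows from an even/odd pattern in the explicit formulas. This replaces your open-ended ``bookkeeping that guarantees a valid choice of $r$'' with a short direct verification.
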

\begin{proof}
To prove $(i)$, we just apply  Lemma \ref{l:cond-R->R'} with $a=1$.
In the case $(ii)$, we start again from $\P_7$, and we get the successive perfect Mondrian partitions $\P'_{k}$, with $k>7$, of a unit square $S$, by doing the following steps:
\begin{itemize}
\item
$\P'_7=\P_7$.
\item
$\P'_8$ is obtained from $\P'_7$ by adding a rectangle on the bottom
with dimensions $x'_8=1$ and $y'_8=1/7$, and re-scaling the rectangle by multiplying all the vertical sides $y'_i$ by $7/8$.
\item
$\P'_9$ is obtained from $\P'_8$ by adding a rectangle on the left
with dimensions $y'_9=1$ and $x'_9=1/8$, and re-scaling the rectangle by multiplying all the horizontal sides $x'_i$ by $8/9$.
\item
$\P'_{10}$ is obtained from $\P'_9$ by adding a rectangle on the bottom
with dimensions $x'_{10}=1$ and $y'_{10}=1/9$, and re-scaling the rectangle by multiplying all the vertical sides $y'_i$ by $9/10$.
\item[] $\vdots$
\item
$\P'_k$ ($k$ odd) is obtained from $\P'_{k-1}$ by adding a rectangle on the left with dimensions $y'_{k}=1$ and $x'_{k}=1/(k-1)$, and re-scaling the rectangle by multiplying all the horizontal sides $x'_i$ by $(k-1)/k$.
Besides,
$\P'_{k+1}$ ($k+1$ even) is obtained from $\P'_{k}$ by adding a rectangle on the bottom
with dimensions $x'_{k+1}=1$ and $y'_{k+1}=1/k$, and re-scaling the rectangle by multiplying all the vertical sides $y'_i$ by $k/(k+1)$.
\end{itemize}

See an example of this method in Figure \ref{f:cases-k=13}.

\begin{figure}[h!]
	\centering
	\includegraphics[scale=0.55]{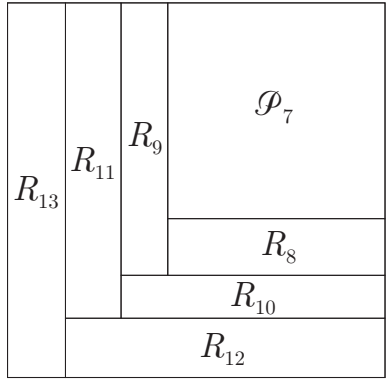}
	\vskip-9.5cm
	\caption{An example of the method used in the proof of Theorem \ref{theorem5-1} for $k=13$. For simplicity of the drawing, the areas of the rectangles are not equal.}
	\label{f:cases-k=13}
\end{figure}

Thus, depending on the parity of $k$, the partition $\P'_k$ has rectangles with the  dimensions shown in Table \ref{dim-k-even} for $k$ even, and Table \ref{dim-k-odd} for $k$ odd.
\begin{table}[h!]
\begin{center}
\begin{tabular}{|c|c|c|}
\hline\hline
Rectangle &  Horizontal side $x'_i$ & Vertical side $y'_i$ \\
\hline\hline
$R'_i (1\le i\le 7)$ &
$x'_i=\frac{8\cdot 10\cdots (k-2)}{9\cdot 11\cdots (k-1)}x_i$ &
$y'_i=\frac{7\cdot 9\cdots (k-1)}{8\cdot 10\cdots k}y_i$\\[.1cm]
\hline
$R'_8$ &
$x'_8=\frac{8\cdot 10\cdots (k-2)}{9\cdot 11\cdots (k-1)}$ &
$y'_8=\frac{9\cdot 11\cdots (k-1)}{8\cdot 10\cdots k}$ \\[.1cm]
\hline
$R'_9$ &
$x'_9=\frac{10\cdot 12\cdots (k-2)}{9\cdot 11\cdots (k-1)}$ &
$y'_9=\frac{9\cdot 11\cdots (k-1)}{10\cdot 12\cdots k}$ \\[.1cm]
\hline
\vdots & \vdots & \vdots \\
\hline
$R'_{i}$ ($i$ even) &
$x'_{i}=x'_{i-1}(i-1)$  & $y'_{i}=\frac{y'_{i-1}}{i-1}$\\[.1cm]
\hline
$R'_{i}$ ($i$ odd) &
$x'_{i}=\frac{x'_{i-1}}{i-1}$  & $y'_{i}=y'_{i-1}(i-1)$\\[.1cm]
\hline
\vdots & \vdots & \vdots \\
\hline
$R'_{k-1}$ &
$x'_{k-1}=\frac{1}{k-1}$ & $y'_{k-1}=\frac{k-1}{k}$ \\[.1cm]
\hline
$R'_{k}$ & $x'_k=1$  & $y'_k=\frac{1}{k}$ \\[.1cm]
\hline\hline
\end{tabular}
\end{center}
\caption{Dimension of the rectangles in $\P'_k(S)$ when $k$ is even.}
\label{dim-k-even}
\end{table}

\begin{table}[h!]
\begin{center}
\begin{tabular}{|c|c|c|}
\hline\hline
Rectangle &  Horizontal side $x'_i$ & Vertical side $y'_i$ \\
\hline\hline
$R'_i (1\le i\le 7)$ &
$x'_i=\frac{8\cdot 10\cdots (k-1)}{9\cdot 11\cdots k}x_i$ &
$y'_i=\frac{7\cdot 9\cdots (k-2)}{8\cdot 10\cdots (k-1)}y_i$\\[.1cm]
\hline
$R'_8$ &
$x'_8=\frac{8\cdot 10\cdots (k-1)}{9\cdot 11\cdots k}$ &
$y'_8=\frac{9\cdot 11\cdots (k-2)}{8\cdot 10\cdots (k-1)}$ \\[.1cm]
\hline
$R'_9$ &
$x'_9=\frac{10\cdot 12\cdots (k-1)}{9\cdot 11\cdots k}$ &
$y'_9=\frac{9\cdot 11\cdots (k-2)}{10\cdot 12\cdots (k-1)}$ \\[.1cm]
\hline
\vdots & \vdots & \vdots \\
\hline
$R'_{i}$ ($i$ even) &
$x'_{i}=x'_{i-1}(i-1)$  & $y'_{i}=\frac{y'_{i-1}}{i-1}$\\[.1cm]
\hline
$R'_{i}$ ($i$ odd) &
$x'_{i}=\frac{x'_{i-1}}{i-1}$  & $y'_{i}=y'_{i-1}(i-1)$\\[.1cm]
\hline
\vdots & \vdots & \vdots \\
\hline
$R'_{k-1}$ &
$x'_{k-1}=\frac{k-1}{k}$ &
$y'_{k-1}=\frac{1}{k-1}$ \\[.1cm]
\hline
$R'_{k}$ & $x'_k=\frac{1}{k}$  & $y'_k=1$ \\[.1cm]
\hline\hline
\end{tabular}
\end{center}
\caption{Dimension of the rectangles in $\P'_k(S)$ when $k$ is odd.}
\label{dim-k-odd}
\end{table}
Notice that if, for example $k$ is even, the whole square $S$ has horizontal side $a=x'_k=1$ and vertical side $b=y'_k+y'_{k-1}=\frac{k-1}{k}+\frac{1}{k}=1$; and similarly when $k$ is odd. Moreover, in both cases, $\area(R'_i)=x'_iy'_i=1/k$, for every $i=1,\ldots,k$, as required.
To finish the proof we only need to check that $R'_i\ncong R'_j$, that is, $(a)$ $x'_i\neq x'_j$ for $i\neq j$; and $(b)$ $x'_i\neq y'_j$ for any $i,j=1,\ldots,k$.
Suppose that $k$ is even (the case of odd $k$ is proved similarly). The first condition $(a)$
clearly holds if $i,j\in[1,7]$ (since $\P'_7$ is perfect) or $i,j\in[8,k]$ (by construction). Moreover, if $i\in[1,7]$ and $j\in[8,k]$, $x_i$, which is not rational, except for $x'_5$, cannot be equal to $x'_j$, which is rational. If $i=5$, $x'_{k-1},x'_k\neq x'_5$. In the other cases, it is immediate checking tables \ref{dim-k-even} and \ref{dim-k-odd} that $x'_j$ cannot be equal to $x'_5$.
Concerning the second condition $(b)$, we have $x'_i\neq y'_j$ if $i,j\in[1,7]$ (since $\P'_7$ is perfect), or if $i,j\in [8,k]$ (checking directly for $k-1$ and $k$, and using the parity argument otherwise, that is, $x'_i=\frac{E}{O}$ and $y_j=\frac{O}{E}$). Finally, if $x'_i\in [1,7]$ and $y'_j\in [8,k]$ or vice-versa, the inequality $x'_i\neq y'_j$ follows either by inspection (if $i=5$ or $j=5$) or since  the non-rational and rational numbers are mutually exclusive.
\end{proof}

\section{Applications to optimization problems} 
\subsection{The Mondrian art problem}\label{ex:div} We recall that the Mondrian art problem consists in partitioning a square of side $n \in \mathbb{N}$ into non-congruent rectangles of natural side lengths, such that the defect (difference between the largest and smallest area of all rectangles) is minimum. From the results of Section \ref{s:first-results}, and since in the cases $k=7$ and $k=8$ the obtained values for the sides of the rectangles are not rational, we get the following result about the Mondrian art problem. That is, to have a perfect Mondrian dissection of a square with rectangles of integer sides.
\begin{corollary}
The is no perfect integer Mondrian partition of a square with a number  $k\le 8$ of  rectangles.
\end{corollary}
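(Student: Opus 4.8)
The plan is to deduce the corollary from the structural and enumerative results of the previous sections, using only the elementary fact that an integer side length is in particular rational. It therefore suffices to prove the stronger statement that, for every $k\le 8$, a square admits no perfect Mondrian $k$-partition all of whose sides are rational. I would organise the argument around the dichotomy $k\le 6$ versus $k\in\{7,8\}$.

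The range $k\le 6$ requires no arithmetic: by Lemma \ref{p:adm} (for $k\le 4$), by Proposition \ref{p:admis5} (for $k=5$), and by the proposition ruling out a perfect Mondrian $6$-partition of a square, a square has no perfect Mondrian $k$-partition whatsoever when $k\le 6$, and in particular no integer one. For $k=7$ and $k=8$ I would combine the classification of Section \ref{s:enumeration} with the explicit side lengths. By Proposition \ref{prop:k=7}, up to symmetry there is a single proper perfect Mondrian $7$-partition, whose sides are listed in Table \ref{t:sol-exact(k=7)}; since some of them (for instance $x_1=\frac{8+\sqrt{19}}{15}$) are irrational, it cannot be realised with integer sides. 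Likewise, Proposition \ref{prop:k=8} yields exactly two proper perfect Mondrian $8$-partitions, and the exact values behind Tables \ref{t:aprox-sol(k=8a)} and \ref{t:aprox-sol(k=8b)} are built from the irrational roots $z_1$ and $z_2$ of \eqref{x1(k8)} and \eqref{x2(k8)}. Hence no \emph{proper} integer perfect Mondrian partition exists for $k=7,8$.

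The delicate point, which I expect to be the main obstacle, is that Section \ref{s:enumeration} classifies only the \emph{proper} partitions, whereas an integer perfect partition might a priori be non-proper. I would close this gap by reducing to the proper case. A perfect partition is admissible by Remark \ref{r:nec}; so if $\P$ is non-proper, Remark \ref{r:nec2} provides a sub-partition $\P'$ of a sub-rectangle $R'$ with $|\P'|=k'<k$, and $\P'$ is again perfect, since each tile has area $\frac1k\area(S)$ and the $k'$ tiles of $\P'$ cover $R'$, forcing $\area(R')=\frac{k'}{k}\area(S)$ and tile-area $\frac1{k'}\area(R')$. Taking $R'$ with $k'$ minimal makes $\P'$ proper, and $2\le k'<k\le 8$. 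It then remains to exclude each value of $k'$: for $k'\le 5$ no perfect Mondrian partition of a rectangle exists at all (Lemma \ref{p:adm}, Proposition \ref{p:admis5}); for $k'=6$ the proof of the $6$-partition impossibility uses no property specific to the square and applies verbatim to any rectangle; and for $k'=7$ (possible only when $k=8$) a proper perfect Mondrian $7$-partition of a rectangle is a rescaling of the unique square solution by Lemma \ref{l:cond-R->R'}, and no rescaling can rationalise all of its sides at once (two of the $x_i$ differ by a rational), so $\P'$ still has an irrational side. In every case the hypothetical integer partition is impossible, completing the proof.
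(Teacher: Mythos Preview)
Your argument is correct and considerably more careful than the paper's, which consists only of the sentence preceding the corollary: the paper simply cites the non-existence results for $k\le 6$ and the irrationality of the computed side lengths for $k=7,8$, without distinguishing proper from non-proper partitions. You rightly notice that Propositions~\ref{prop:k=7} and~\ref{prop:k=8} classify only the \emph{proper} perfect Mondrian partitions, and you close this gap by passing to a minimal proper sub-partition $\P'$ of a sub-rectangle $R'$; this reduction is sound, and your observation that the $k=6$ argument uses nothing specific to the square is accurate.

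One point deserves tightening. In the case $k'=7$ you invoke Lemma~\ref{l:cond-R->R'} to assert that a proper perfect Mondrian $7$-partition of a rectangle is a rescaling of the unique square solution. Lemma~\ref{l:cond-R->R'} goes in the wrong direction for this: it produces a partition of $R'$ from one of $R$, not the converse, and does not yield uniqueness. The clean justification is the one implicit in Section~\ref{s:enumeration} and visible in the $k=5$ analysis of Proposition~\ref{p:admis5}: the digraph enumeration is insensitive to the aspect ratio of $R'$, and for the unique admissible digraph $G(\P_{7a})$ the resulting equations are homogeneous, so the ratios $x_i/a'$ (and $y_i/b'$) coincide with the unit-square values in Table~\ref{t:sol-exact(k=7)}. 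Your rationality argument (e.g.\ $x_1-x_3=\tfrac{1}{15}$ while $x_1\notin\mathbb{Q}$) then finishes the job exactly as you say. With this small correction, your proof is complete and in fact supplies the details the paper omits.
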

Nevertheless, using the approximate values of Table \ref{t:sol-aprox(k=7)}, we can build admissible partitions of a square in seven rectangles with all the sides being natural numbers. See, for example, for a square of side 100, we have the example of Figure \ref{F:Ejp1} (a) with a defect of 74, this is, 74\% of the side.

\begin{figure}[h]
\centering
\includegraphics[scale=0.6]{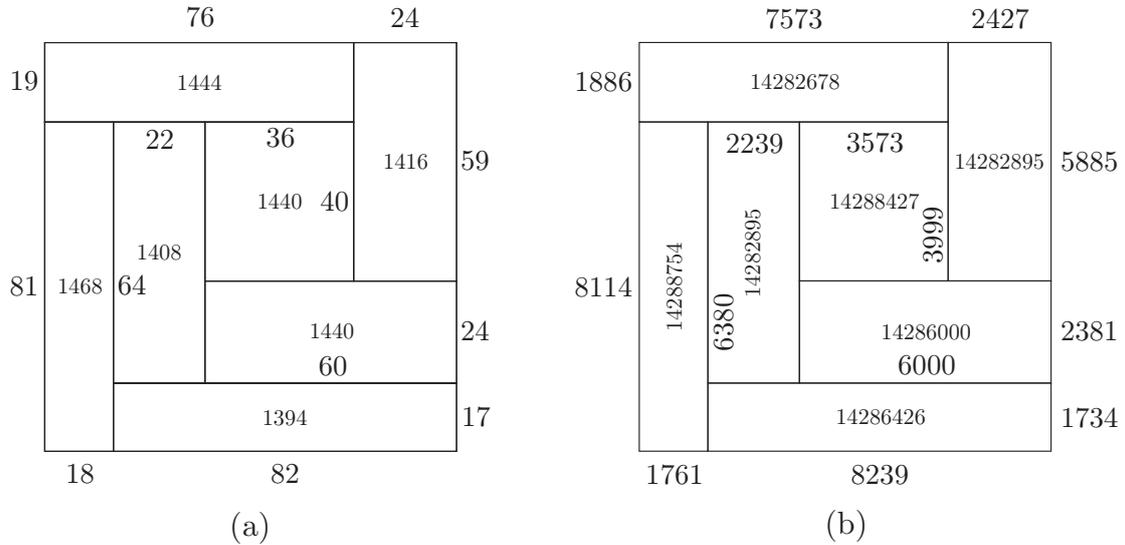}
\vskip-5.25cm
\caption{(a) A partition for the Mondrian art problem taking $n=100$ and defect $74$. (b) Another example for $n=1000$ and defect $6076$.}
\label{F:Ejp1}
\end{figure}

Also, considering a square of side 10000, we have the example of Figure \ref{F:Ejp1} (b) with a difference of 6076, this is less than 61\% of the side.



\subsection{Decompositions of the unit square in non-congruent rectangles where the maximum of their perimeters is minimized}
The problem of decomposing a unit square into $k$ rectangles of area $1/k$ such that the maximum of the perimeters of the rectangles is minimized was solved by Kong, Mount and Werman in \cite{KMW87}. The optimal decomposition provided by them contains many congruent rectangles for any $k$. So, it is natural to ask for those optimal decompositions using non-congruent rectangles.


\begin{problem}\label{prob:dec}
Given a positive integer $k$, decompose a unit square into $k$ non-congruent rectangles of area $1/k$ such that the maximum of their perimeters is minimized.
\end{problem}

Due to the results given in Section \ref{s:first-results}, there is no solution to problem \ref{prob:dec} for $k<7$. Moreover, for $k=7$, there is a unique decomposition (up to symmetries, see Figure \ref{F: Case7b}) and the maximum of the perimeters of the partition is given by $2(x_1+y_1)=\frac{192+4\sqrt{19}}{105} \approx 1.9946$ (see Table \ref{t:sol-exact(k=7)}). For $k=8$, there are just two decompositions (Proposition \ref{prop:k=8}) and the maximum of the perimeters of the rectangles is minimized in the second case. Here the dimensions of the largest rectangle are $x_1$ and $y_1$ given in Table \ref{t:aprox-sol(k=8b)}, obtaining the perimeter value $2(x_1+y_1)\approx 1.9641$.

\begin{figure}[htb]
	\centering
	\begin{tabular}{ccc}
	\includegraphics[scale=0.65]{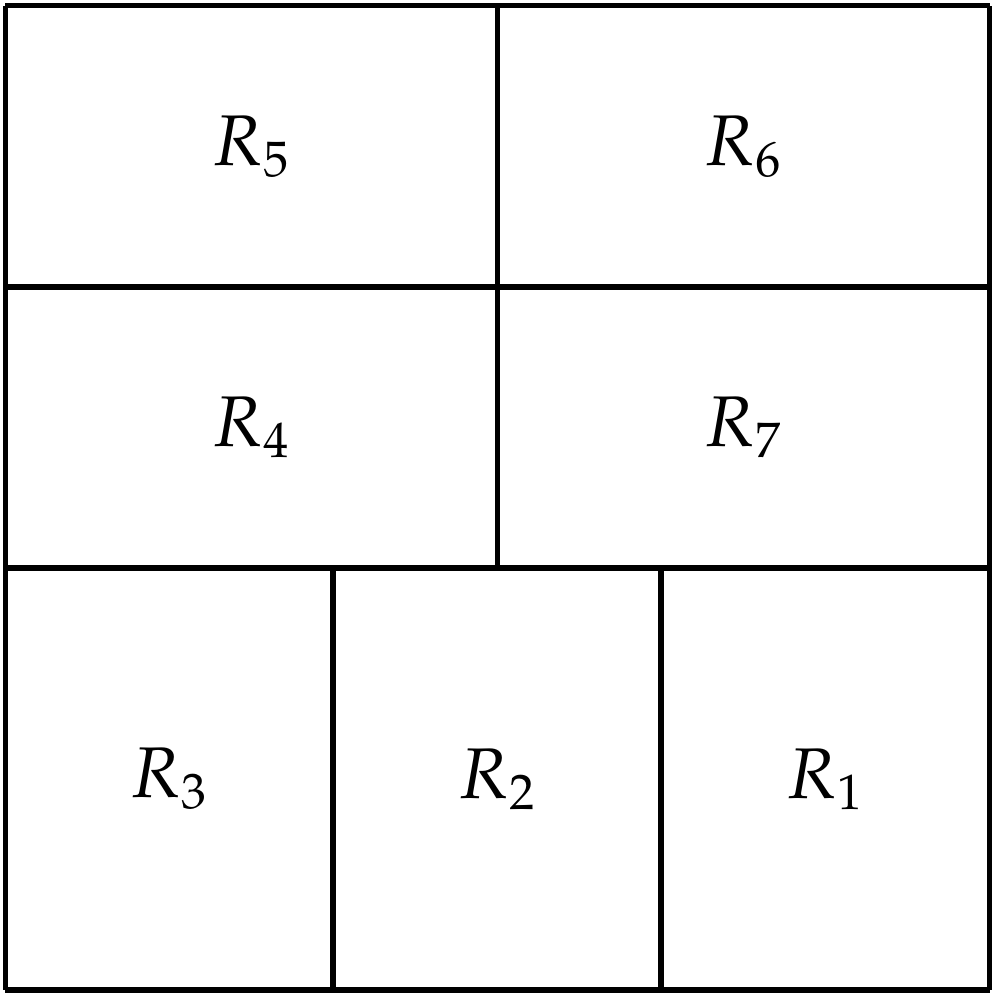} & & \includegraphics[scale=0.65]{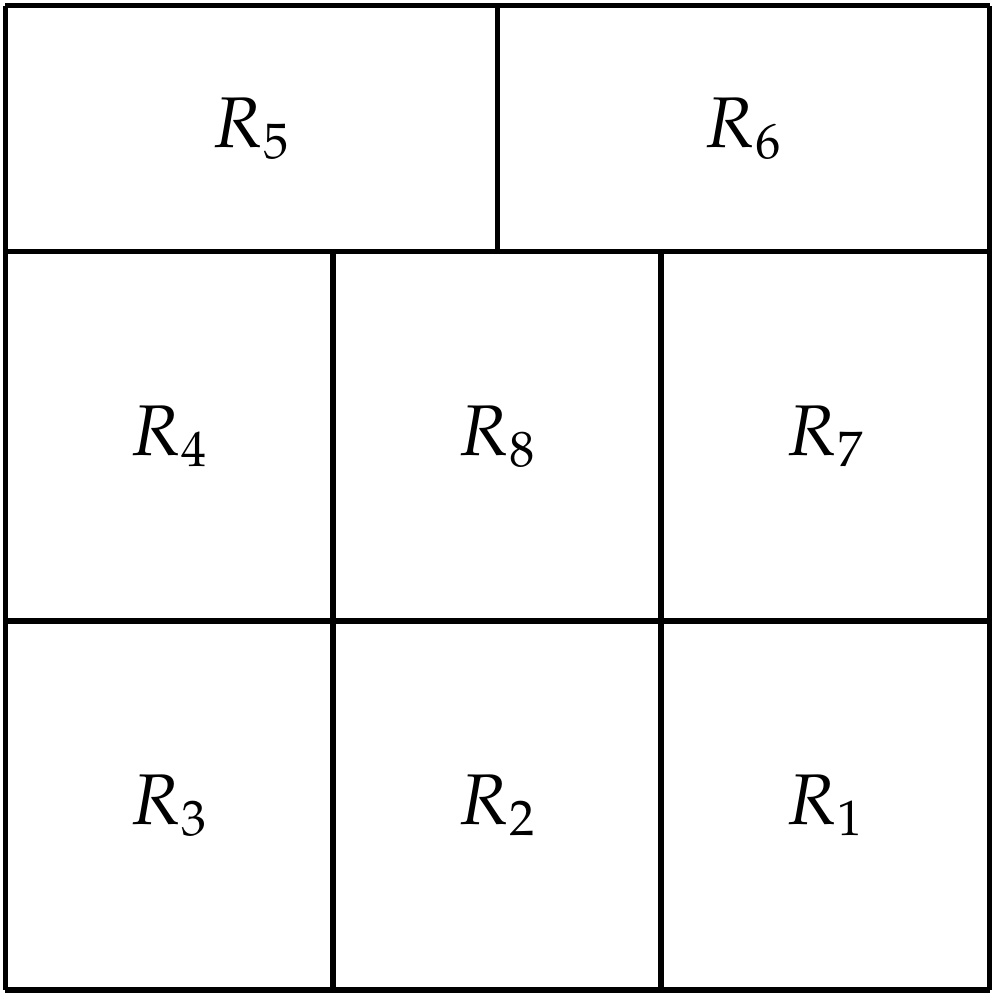} \\
	(a) & & (b)
	\end{tabular}
	\caption{Optimal decompositions of the unit square such that the maximum of the perimeters of the rectangles is minimized, for $k=7$ (a) and $k=8$ (b). Congruent rectangles are allowed in this case.}
	\label{F: Kong}
\end{figure}

We can compare these solutions with the optimal ones given by Kong, Mount and Werman. For $k=7$, the maximum of the perimeters is minimized at $22/14\approx 1.5714$ (corresponding to the perimeter of any of the congruent rectangles $R_1,R_2$ or $R_3$ in Figure \ref{F: Kong} (a)). For $k=8$, the solution is given by $17/12 \approx 1.4167$ (corresponding to the perimeter of any of the congruent rectangles $R_1,\dots,R_4,R_7,R_8$ in Figure \ref{F: Kong} (b)).

Although many decomposition problems have exact solutions, it seems that those involving non-congruent rectangles, like the ones featured in this section, are difficult to solve. Nevertheless, we hope to shed some light on them with the results presented here. 

\subsection*{Acknowledgments}
The research of C. Dalf\'o and N. L\'opez has been partially supported by grant MTM2017-86767-R (Spanish Ministerio de Ciencia e Innovaci\'on). The research of C. Dalf\'o and M. A. Fiol has been partially supported by
AGAUR from the Catalan Government under project 2017SGR1087 and by MICINN from the Spanish Government under project PGC2018-095471-B-I00. 


\end{document}